\documentclass[12pt,a4paper]{amsart}

\usepackage{amsmath,amssymb,amsthm}
\usepackage{mathtools}
\usepackage{enumitem}
\usepackage{hyperref}
\usepackage{geometry}
\usepackage{booktabs}
\newtheorem{theorem}{Theorem}[section]
\newtheorem{lemma}[theorem]{Lemma}
\newtheorem{proposition}[theorem]{Proposition}
\newtheorem{corollary}[theorem]{Corollary}
\theoremstyle{definition}

\theoremstyle{remark}
\newtheorem{remark}[theorem]{Remark}

\newcommand{\SD}{\mathrm{SD}}
\newcommand{\Z}{\mathbb{Z}}
\DeclareMathOperator{\ord}{ord}

\begin{document}

\title[Zappa-Sz\'ep products of semidihedral groups]{On Zappa-Sz\'ep products of two semidihedral groups}

\author{Riccardo Aragona}
\address{Dipartimento di Ingegneria e Scienze dell'Informazione e Matematica,
Universit\`a degli Studi dell'Aquila, Via Vetoio, 67100 L'Aquila, Italy}
\email{riccardo.aragona@univaq.it}
\thanks{The author's ORCiD: 0000-0001-8834-4358}
\thanks{The author is member of INdAM-GNSAGA (Italy).}
\thanks{The author declares that he has no conflict of interest.}
\subjclass[2020]{20D40, 20D60, 20F05, 20D15, 20E22}
\keywords{Zappa--Sz\'ep products, exact factorizations of finite groups, semidihedral group, polycyclic presentations, consistency conditions}
\date{\today}

\begin{abstract}
	Let $n, m \ge 4$.   We classify the Zappa--Sz\'ep products
	$G = HK$ with $H = \langle x\rangle \rtimes \langle y\rangle \cong \SD_{2^n}$
	and $K = \langle z\rangle \rtimes \langle w\rangle \cong \SD_{2^m}$,
	according to the cores of $\langle x\rangle$ and $\langle z\rangle$
	in~$G$.  First, when both $\langle x\rangle$ and $\langle z\rangle$
	are normal in~$G$, we obtain a complete classification of such exact
	products by an explicit system of six polynomial congruences.
	Second, when the cores $\langle x\rangle^G$ and $\langle z\rangle^G$
	are arbitrary subgroups of $\langle x\rangle$ and $\langle z\rangle$,
	under the simplifying assumption $[x, z] = 1$ we obtain an analogous
	classification by twelve congruences together with two order
	conditions; this is the semidihedral counterpart of the Hu--Yu
	classification~\cite{HuYu2025} for dihedral groups.   In contrast
	with the dihedral case, we further construct an explicit exact
	product with both cores non-trivial and $[x, z] \ne 1$, showing
	that the parameter space in the semidihedral setting is strictly
	richer than its dihedral analogue.
\end{abstract}

\maketitle

\section{Introduction}

An expression $G = HK$ of a finite group $G$ as a product of proper
subgroups $H$ and $K$ is called a \emph{factorization} of $G$.   When
$H\cap K = \{1\}$, the factorization is called \emph{exact}, and in
this case we also say that $G$ is the \emph{Zappa--Sz\'ep} product of
$H$ and $K$~\cite{Szep1949,Zappa1940}.

The factorization problem -- that is, the problem of determining all
factorizations of a given group -- is a longstanding question in
group theory, arising naturally from the work of Ore~\cite{Ore1937}.
Earlier contributions in this direction had already appeared in the
work of Miller~\cite{Miller1935}, who investigated conditions under
which a group can be expressed as the product of two subgroups.

The problem of describing the structure of a group factorised by two
prescribed subgroups has a long and rich history.   Several landmark
results were established by leading group theorists in the twentieth
century: It\^o~\cite{Ito1955} proved that any product of two abelian
subgroups is metabelian; Douglas~\cite{Douglas1961} proved that
products of two cyclic groups are supersolvable; and the results of Wielandt~\cite{Wielandt1951} and Kegel~\cite{Kegel1961}
established that products of two nilpotent subgroups are necessarily
soluble.   

The factorisation problem for almost simple groups, and for finite
groups containing a regular subgroup of a primitive permutation group,
has attracted considerable attention.   Hering, Liebeck and
Saxl~\cite{HeringLiebeckSaxl1987} classified the factorisations of
finite exceptional groups of Lie type, and the landmark work of
Liebeck, Praeger and Saxl~\cite{LPS1996} classified all maximal
factorisations of almost simple groups.   The closely related problem
of describing regular subgroups of primitive permutation groups was
systematically treated by Liebeck, Praeger and
Saxl~\cite{LiebeckPraegerSaxl2010}, and further developed by
Xia~\cite{Xia2017} in the case where the overgroup contains a
transitive alternating group.

In this paper we study the following variant of the factorization
problem: given two finite groups $H$ and $K$, determine all of their
exact products, that is, all Zappa--Sz\'ep products of $H$ and $K$.

Closer to our context, exact factorisations have received intensive
study in recent years.   Burness and Li~\cite{BurnessLi2021}
classified the solvable factors of almost simple groups, and Li, Wang
and Xia~\cite{LiWangXia2023} obtained a complete classification of
all exact factorisations of almost simple groups.   A complementary
thread, motivated by the study of cyclic regular subgroups of
primitive permutation groups, originates in the work of
Jones~\cite{Jones2002}.

In this direction, Hu and Yu~\cite{HuYu2025} treated the case of two
dihedral groups, parameterising every exact product $G = HK$ with
$H \cong D_{2n}$ and $K \cong D_{2m}$, for $n, m \ge 3$ odd, by four
integer parameters in suitable cyclic quotients subject to explicit
congruences.   A first step beyond the dihedral case -- the case of a
dihedral group times a cyclic group -- was treated
in~\cite{HKK2024}.

The present paper initiates the analogous classification for two
\emph{semidihedral} groups.   Recall that the semidihedral group of
order $2^n$, for $n \ge 4$, is
\begin{equation}\label{eq:SD-def}
  \SD_{2^n}
  = \langle x,y \mid x^{2^{n-1}}=y^2=1,\; x^y=x^{\alpha}\rangle,
  \qquad \alpha = 2^{n-2}-1.
\end{equation}
Throughout the paper we fix integers $n, m \ge 4$ and set
\[
  N := 2^{n-1}, \qquad M := 2^{m-1},
\]
so that $|\SD_{2^n}| = 2N$ and $|\SD_{2^m}| = 2M$, with
$\alpha = 2^{n-2}-1$ and $\beta = 2^{m-2}-1$ the
exponents of the corresponding semidihedral automorphisms.

The arithmetic engine of our analysis is the congruence
\begin{equation}\label{eq:alpha-mod4}
  \alpha = 2^{n-2}-1 \equiv 3 \mod{4},\quad n \ge 4,
\end{equation}
together with the identity $1+\alpha = 2^{n-2} \not\equiv 0 \mod{N}$.
The non-vanishing of $1+\alpha$ modulo $N$ is responsible for
several phenomena that have no analogue in the dihedral case, where
$\alpha = -1$ and so $1+\alpha = 0$.   The Zappa--Sz\'ep products of a
semidihedral group with a cyclic group have already been studied
in~\cite{Yu2025}.

We denote by \(G\) a Zappa-Sz\'ep product \(HK\) of \(H= \langle x\rangle \rtimes \langle y\rangle \cong \SD_{2^n}\) and \(K= \langle z\rangle \rtimes \langle w\rangle \cong \SD_{2^m}\). We parameterise the action of $H$ on $K$ and of
$K$ on $H$ in  $G = HK$ by writing the mixed
commutators in the form
\[
  [x,z] = x^{e_1} z^{e_2},\quad
  [z,y] = x^r z^a,\quad
  [x,w] = x^s z^b,\quad
  [y,w] = x^t z^c.
\]
We carry out a classification of Zappa-Sz\'ep products \(HK\) according to the cores $\langle x\rangle^G$ and $\langle z\rangle^G$ in~$G$, obtaining the
following two main results.


\medskip\noindent\textbf{Theorem~A} (Theorem~\ref{thm:main}).
\emph{When $\langle x\rangle$ and $\langle z\rangle$ are normal in
$G$, that is, when $\langle x\rangle^G = \langle x\rangle$ and
$\langle z\rangle^G = \langle z\rangle$, the parameters
$e_1, e_2, r, b$ all vanish, and the remaining four parameters
$(a, s, t, c) \in \Z_M \times \Z_N \times \Z_N \times \Z_M$
characterise $G$ subject to a system of six explicit polynomial
congruences, given in equations~\eqref{eq:C1}--\eqref{eq:C6} below.
Conversely, every such tuple defines an exact product with the
prescribed normality property.}

\medskip\noindent\textbf{Theorem~B} (Theorem~\ref{thm:gen}).
\emph{When the cores are arbitrary subgroups
$\langle x\rangle^G = \langle x^{n_1}\rangle$ of \(\langle x\rangle \) and
$\langle z\rangle^G = \langle z^{m_1}\rangle$ of \(\langle z \rangle\), with $n_1 \mid N$ and
$m_1 \mid M$, under the simplifying assumption
$[x,z] = 1$ the six parameters $(r, a, s, b, t, c)$ characterise
$G$ subject to twelve explicit congruences together with the order
conditions $\ord_{\Z_N}(r) = m_1$ and $\ord_{\Z_M}(b) = n_1$.}

\medskip

Theorem~B specialises to Theorem~A in the case $n_1 = m_1 = 1$.
The proofs combine elementary group-theoretic identities (for
necessary condition) with the consistency theorem for polycyclic
presentations (for sufficient condition,
see~\cite[Lemma 2.5]{Eick2000} and~\cite[p.~424]{Sims}).

The paper is organised as follows.
Section~\ref{sec:setup} fixes notation and records some key arithmetic
facts.   Section~\ref{sec:cores} establishes the lemma showing that
the normality hypothesis forces $[x,z] = 1$, $r = 0$ and $b = 0$.
In Section~\ref{sec:main-thm} we prove Theorem~\ref{thm:main}. In particular, necessary condition by
direct expansion of group identities, sufficient condition by the
consistency theorem for polycyclic presentations
(see~\cite[Lemma 2.5]{Eick2000} and~\cite[p.~424]{Sims}).   In Section~\ref{sec:examples} we exhibit
explicit counter-examples to the apparently natural claim that
$s = 0$ when $n = m$, with the simplest example already occurring in
the equal-rank case $n = m = 4$.   Section~\ref{sec:nontrivial}
generalises the classification to arbitrary cores under the assumption
$[x,z] = 1$.   In Subsection~\ref{sec:xz-nontrivial} for showing that the
assumption $[x,z] = 1$ in Theorem~B is genuinely restrictive, we
construct an explicit exact product with both cores non-trivial and
$[x,z] \ne 1$ (Theorem~\ref{thm:xz-nontrivial-example}), a phenomenon
that has no dihedral analogue.   Finally, Section~\ref{sec:open}
collects some concluding remarks and a natural open problem.

\section{Notation and some arithmetic fatcs}\label{sec:setup}

Throughout the paper, all groups are finite.  We write $[g,h]=g^{-1}h^{-1}gh$ and $g^h=h^{-1}gh$,
so $g^h=g\cdot[g,h]$.

\begin{lemma}[Arithmetic of $\alpha$]\label{lem:alpha}
For $n\ge 4$ and $\alpha=2^{n-2}-1$:
\begin{enumerate}[label=\normalfont(\roman*)]
  \item $\alpha^2\equiv 1\mod{2^{n-1}}$.\label{it:sq}
  \item $\alpha\equiv -1\mod{2^{n-2}}$, so $\alpha$ is a unit in
    $\Z_{2^{n-1}}$.\label{it:unit}
\end{enumerate}
\end{lemma}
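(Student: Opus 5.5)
Both parts come from expanding $\alpha=2^{n-2}-1$ directly, so the plan is a short computation in each case rather than any appeal to earlier results.

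For \ref{it:sq}, I will compute
\[
\alpha^2=(2^{n-2}-1)^2=2^{2n-4}-2^{n-1}+1 .
\]
Since $n\ge 4$, we have $2n-4\ge n$, so $2^{2n-4}\equiv 0 \mod{2^{n-1}}$; in fact it is already $0$ modulo $2^{n}$. The middle term $2^{n-1}$ also vanishes modulo $2^{n-1}$. Hence $\alpha^2\equiv 1\mod{2^{n-1}}$. This shows that $x\mapsto x^{\alpha}$ is an involutory automorphism of $\langle x\rangle$, so the presentation \eqref{eq:SD-def} is consistent.

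For \ref{it:unit}, the definition gives $\alpha+1=2^{n-2}$, so $\alpha\equiv-1\mod{2^{n-2}}$ at once. For the unit claim there are two equivalent routes. One is that $\alpha$ is odd, because $n-2\ge 2$ makes $2^{n-2}$ even; being coprime to $2^{n-1}$, it is invertible in $\Z_{2^{n-1}}$. The other is that part \ref{it:sq} exhibits $\alpha$ as its own inverse modulo $2^{n-1}$.

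I expect no real obstacle. The only point needing care is the exponent comparison $2n-4\ge n-1$, which requires $n\ge 3$ and so certainly holds under the standing hypothesis $n\ge4$. I will also note in passing that $1+\alpha=2^{n-2}\not\equiv 0\mod{2^{n-1}}$, which is the asymmetry with the dihedral case stressed in the introduction. Its companion congruence \eqref{eq:alpha-mod4} follows from $4\mid 2^{n-2}$ for $n\ge 4$.
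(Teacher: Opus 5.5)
Your proposal is correct and follows the paper's proof: the paper also expands $\alpha^2=2^{2n-4}-2^{n-1}+1$ and uses $2n-4\ge n-1$ to obtain (i). The paper leaves (ii) implicit, and your argument that $\alpha+1=2^{n-2}$ with $\alpha$ odd (or, alternatively, self-inverse by (i)) fills it in correctly.
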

\begin{proof}
$\alpha^2=(2^{n-2}-1)^2=2^{2n-4}-2^{n-1}+1\equiv 1\mod{2^{n-1}}$
since $2n-4\ge n-1$ for $n\ge 4$.
\end{proof}

Let $n,m\ge 4$ and let $G=HK$ be a Zappa-Sz\'ep product of
$H= \langle x\rangle \rtimes \langle y\rangle \cong \SD_{2^n}$ and
$K= \langle z\rangle \rtimes \langle w\rangle \cong \SD_{2^m}$ as in
Theorem~A.   The conditions $\langle x\rangle^G = \langle x\rangle$
and $\langle z\rangle^G = \langle z\rangle$ are equivalent to
$\langle x\rangle, \langle z\rangle \trianglelefteq G$.

In this setting, every commutator of a generator of $H$ with a
generator of $K$ lies in $A := \langle x\rangle\langle z\rangle$.
Indeed, $y^{-1} z y \in \langle z\rangle$ by normality of
$\langle z\rangle$, so $[z,y] \in \langle z\rangle \subseteq A$;
similarly $[x,w] \in \langle x\rangle \subseteq A$.   For $[x,z]$
and $[y,w]$, observe that $A$ is normal in $G$, being the product of
two normal subgroups, and that $G/A$ is generated by the images of
$y$ and $w$, which have order at most $2$.   Hence $G/A$ is a
quotient of $\Z_2\times\Z_2$, in particular abelian, and so
$[x,z],[y,w] \in A$.   We may therefore write
\begin{equation}\label{eq:mixed}
	[x,z]=x^{e_1}z^{e_2},\quad
	[z,y]=x^r z^a,\quad
	[x,w]=x^s z^b,\quad
	[y,w]=x^t z^c,
\end{equation}
with $e_1,r,s,t\in\Z_N$ and $e_2,a,b,c\in\Z_M$.

In the more general setting of Theorem~B in which the cores
$\langle x\rangle^G = \langle x^{n_1}\rangle$ and
$\langle z\rangle^G = \langle z^{m_1}\rangle$ are proper subgroups
of $\langle x\rangle$ and $\langle z\rangle$ respectively, the
subgroups $\langle x\rangle$ and $\langle z\rangle$ are no longer
normal in~$G$ and the argument above does not apply directly.
Nevertheless, a presentation of the form \eqref{eq:mixed} can still
be obtained, at the cost of replacing $A$ with the product of the
two cores and of allowing $[x,z]$ to be non-trivial; the details
are given in Section~\ref{sec:nontrivial}.

Next lemma gives a key step in classifying the parameters $a$ and $s$ is the enumeration
of involutions of $\Z_N$ of the form $u = 1 + s$ with $s$ even.

\begin{lemma}\label{prop:four-solutions}
	For $n\ge 4$, the units $u\in\Z_N$ with $u^2\equiv 1\mod{N}$ are
	\[
	u\in\{1,\,1+2^{n-2},\,-1,\,-1+2^{n-2}\}\mod{N}.
	\]
	The corresponding values $s = u-1$ (all of which are automatically even,
	as each $u$ is odd) are
	\begin{equation}\label{eq:s-set}
		s\in\{0,\,2^{n-2}-2,\,2^{n-2},\,2^{n-1}-2\}\mod{N}.
	\end{equation}
	The analogous statement holds in $\Z_M$ for $a$.
\end{lemma}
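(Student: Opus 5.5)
We classify the square roots of unity in $\Z_N$, $N=2^{n-1}$, by a direct $2$-adic factorisation argument. We then translate the result to $s=u-1$.

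First, any $u$ with $u^2\equiv 1 \pmod N$ is automatically odd, since $N$ is even. So the unit hypothesis costs nothing. Write $u^2-1=(u-1)(u+1)$. Since $u$ is odd, both $u-1$ and $u+1$ are even. Their difference is $2$, so exactly one of them is divisible by $4$ and the other is exactly divisible by $2$. Hence $v_2\bigl((u-1)(u+1)\bigr)=1+\max\{v_2(u-1),v_2(u+1)\}$.

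The condition $2^{n-1}\mid u^2-1$ is therefore equivalent to $2^{n-2}\mid u-1$ or $2^{n-2}\mid u+1$. This means $u\equiv \pm1 \pmod{2^{n-2}}$. Lifting to $\Z_N$, each class modulo $2^{n-2}$ has exactly two lifts modulo $2^{n-1}$. This gives the four residues $1,\,1+2^{n-2},\,-1,\,-1+2^{n-2}$.

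These four residues are pairwise distinct modulo $N$. The classes $1$ and $-1$ are distinct modulo $2^{n-2}$ because $2^{n-2}\ge 4>2$ for $n\ge4$. This is the one place where the hypothesis $n\ge 4$ is used; for $n=3$ the list would collapse. Conversely, each residue squares to $1$. For example,
\[
(\pm1+2^{n-2})^2=1\pm2^{n-1}+2^{2n-4}\equiv 1 \pmod{2^{n-1}},
\]
using $2n-4\ge n-1$, exactly as in Lemma~\ref{lem:alpha}(i). Note also that $-1+2^{n-2}=\alpha$, which is consistent with that lemma.

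Subtracting $1$ then yields the values of $s$ listed in~\eqref{eq:s-set}:
\[
0,\qquad 2^{n-2},\qquad -2\equiv 2^{n-1}-2,\qquad 2^{n-2}-2 \pmod N.
\]
Each of these is even. The statement for $a\in\Z_M$ is identical, with $m$ in place of $n$.

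There is no real obstacle in this argument. The only point needing care is the valuation count, together with checking that the four residues are distinct, which requires $n\ge4$.
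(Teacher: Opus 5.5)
Your proof is correct and follows the same route as the paper. Both arguments reduce $u^2\equiv 1 \pmod{2^{n-1}}$ to $u\equiv \pm 1 \pmod{2^{n-2}}$ and then lift to the four residues modulo $N$. Where the paper simply asserts that reduction, you justify it with the $2$-adic valuation count on $(u-1)(u+1)$, and you also add the distinctness check (which is where $n\ge 4$ enters) and the converse verification.
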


\begin{proof}
	The solutions of $u^2\equiv 1\mod{2^{n-1}}$ for $n\ge 4$ are
	$u\equiv \pm 1\mod{2^{n-2}}$; rewriting modulo~$2^{n-1}$ gives the
	four listed values.   Each is odd, hence each $s=u-1$ is even.
\end{proof}

\begin{remark}\label{rem:missing-solution}
	The value $s = 2^{n-2}-2$ (corresponding to $u = -1 + 2^{n-2}$) is
	the new feature absent from the dihedral case.  For $n = 4$ it
	gives $s = 2$, and one verifies $(1+2)^2 = 9\equiv 1 \mod 8$.
	It is precisely this missing solution that admits exact products
	with $s\ne 0$ for $n = m$ (Section~\ref{sec:examples}).
\end{remark}

\section{The normality hypothesis}\label{sec:cores}

The following lemma is the foundation of the entire classification. It shows that the apparently mild normality hypothesis is in fact
very strong, eliminating four of the eight parameters in~\eqref{eq:mixed}.

\begin{lemma}\label{lem:trivial-cores}
Under the normality hypothesis
$\langle x\rangle\trianglelefteq G$ and $\langle z\rangle\trianglelefteq G$,
\[
  e_1 = 0\mod N,\quad
  e_2 = 0\mod M,\quad
  r = 0\mod N,\quad
  b = 0\mod M.
\]
Consequently $A = \langle x\rangle \times \langle z\rangle$ is an
abelian subgroup of $G$ isomorphic to $\Z_N \times \Z_M$, and the
conjugation formulae~\eqref{eq:mixed} reduce to
\begin{equation}\label{eq:reduced-conj}
  [x,z]=1,\qquad
  z^y = z^{1+a},\qquad
  x^w = x^{1+s},\qquad
  y^w = y\,x^t z^c.
\end{equation}
\end{lemma}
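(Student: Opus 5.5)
The plan is to read everything off from the fact that, under the normality hypothesis, each mixed commutator in~\eqref{eq:mixed} is forced to lie in one of the two cyclic factors, combined with exactness of the factorization. The first step is to record that $\langle x\rangle\cap\langle z\rangle\subseteq H\cap K=\{1\}$. Consequently every element of $A=\langle x\rangle\langle z\rangle$ has a \emph{unique} expression $x^iz^j$ with $i\in\Z_N$, $j\in\Z_M$, since $x^iz^j=x^{i'}z^{j'}$ gives $x^{i-i'}=z^{j'-j}\in\langle x\rangle\cap\langle z\rangle=\{1\}$. This uniqueness is what makes the exponents in~\eqref{eq:mixed} well defined modulo $N$ and $M$. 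It is also what lets us conclude that a parameter vanishes once we know the corresponding element lies in one of the factors.

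Next I would treat $[x,z]$ by the standard argument for two normal subgroups with trivial intersection. On one hand, $[x,z]=x^{-1}(z^{-1}xz)\in\langle x\rangle$ because $\langle x\rangle\trianglelefteq G$. On the other hand, $[x,z]=(x^{-1}z^{-1}x)z\in\langle z\rangle$ because $\langle z\rangle\trianglelefteq G$. Hence $[x,z]\in\langle x\rangle\cap\langle z\rangle=\{1\}$, and uniqueness of the normal form gives $e_1\equiv0\pmod N$ and $e_2\equiv0\pmod M$.

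For $r$ and $b$ I would argue in the same way. We have $[z,y]=z^{-1}z^y\in\langle z\rangle$ by normality of $\langle z\rangle$. So $x^rz^a\in\langle z\rangle$, which gives $x^r\in\langle x\rangle\cap\langle z\rangle=\{1\}$, that is, $r\equiv0\pmod N$. Symmetrically, $[x,w]=x^{-1}x^w\in\langle x\rangle$ forces $z^b=x^{-s}[x,w]\in\langle x\rangle\cap\langle z\rangle$, so $b\equiv0\pmod M$.

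Finally I would deduce the consequences. Since $x$ and $z$ commute and the intersection is trivial, $A=\langle x\rangle\times\langle z\rangle\cong\Z_N\times\Z_M$ is abelian. The reduced formulae~\eqref{eq:reduced-conj} follow from $g^h=g[g,h]$:
\[
z^y=z\,z^a=z^{1+a},\qquad x^w=x\,x^s=x^{1+s},\qquad y^w=y\,x^tz^c.
\]
There is no real obstacle here. The only point needing care is to invoke exactness, via $\langle x\rangle\cap\langle z\rangle=\{1\}$, so that the exponents are uniquely determined and their vanishing is a genuine congruence.
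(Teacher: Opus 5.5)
Your proposal is correct and follows essentially the same route as the paper. The paper also traps $[x,z]$ in $\langle x\rangle\cap\langle z\rangle\subseteq H\cap K=\{1\}$ via the two normality expressions, and kills $r$ and $b$ because normality forces $x^r$ and $z^b$ into that trivial intersection; the only cosmetic differences are that you argue with the commutators $[z,y]\in\langle z\rangle$ and $[x,w]\in\langle x\rangle$ directly (so you do not need $[x,z]=1$ at that step, unlike the paper's computation $z^y=x^rz^{1+a}$), and that you make explicit the uniqueness of the normal form $x^iz^j$ in $A$, which the paper uses implicitly.
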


\begin{proof}
($e_1 = e_2 = 0$).  Since $\langle x\rangle\trianglelefteq G$,
$z^{-1} x z \in \langle x\rangle$, hence
\[
  [x,z] = x^{-1}(z^{-1} x z) \in \langle x\rangle.
\]
Since $\langle z\rangle\trianglelefteq G$,
$x^{-1} z^{-1} x = (x^{-1} z x)^{-1} \in \langle z\rangle$, hence
\[
  [x,z] = (x^{-1} z^{-1} x) z \in \langle z\rangle.
\]
Therefore $[x,z]\in \langle x\rangle \cap \langle z\rangle
\subseteq H\cap K = \{1\}$, so $[x,z] = 1$, i.e.\ $e_1 = 0$, $e_2 = 0$.

 ($r = 0$).  Since $\langle z\rangle\trianglelefteq G$,
$z^y = y^{-1} z y \in \langle z\rangle$.  From~\eqref{eq:mixed},
\[
  z^y = z \cdot [z, y] = z \cdot x^r z^a = x^r z^{1+a}
\]
since $[x,z]=1$.  Hence, in order to lie in
$\langle z\rangle$, we need $x^r = 1$ in~$G$.  Since $\langle x\rangle$
has order~$N$, $r\equiv 0\mod{N}$.

 ($b = 0$).  Symmetrically,
$\langle x\rangle\trianglelefteq G$ implies
$x^w = x \cdot [x, w] = x^{1+s} z^b \in \langle x\rangle$, hence $z^b = 1$
and $b = 0\mod M$.

(Structure of $A$).  Since $[x, z] = 1$ and
$\langle x\rangle \cap \langle z\rangle \subseteq H \cap K = \{1\}$,
$A = \langle x\rangle \langle z\rangle \cong \Z_N \times \Z_M$.

The conjugation formulae~\eqref{eq:reduced-conj} follow by
substituting $r = b = 0$ into~\eqref{eq:mixed}.
\end{proof}



\section{Proof of the main theorem}\label{sec:main-thm}

We now prove our first main theorem.

\begin{theorem}\label{thm:main}
Let $n,m\ge 4$, with $\alpha=2^{n-2}-1$, $\beta=2^{m-2}-1$,
$N := 2^{n-1}$, $M := 2^{m-1}$.  Set
\begin{align*}
  H &= \langle x,y \mid x^N=y^2=1,\; x^y=x^\alpha\rangle \cong \SD_{2^n},\\
  K &= \langle z,w \mid z^M=w^2=1,\; z^w=z^\beta\rangle \cong \SD_{2^m}.
\end{align*}
A tuple $(a, s, t, c) \in \Z_M\times\Z_N\times\Z_N\times\Z_M$ defines
via the polycyclic presentation
\begin{equation}\label{eq:reduced-pres}
  G = \left\langle x, y, z, w \,\middle|\,
  \begin{array}{l}
    x^N = y^2 = z^M = w^2 = 1,\ [x,z] = 1, \\
    x^y = x^\alpha,\ z^w = z^\beta, \\
    {[z,y]} = z^a,\ {[x,w]} = x^s,\ {[y,w]} = x^t z^c
  \end{array}
  \right\rangle
\end{equation}
an exact product $G = HK$ with $\langle x\rangle, \langle z\rangle$
normal in $G$, if and only if $(a, s, t, c)$ satisfies the six
congruences
\begin{align}
  (1+a)^2 &\equiv 1 \mod{M}, \tag{C1}\label{eq:C1}\\
  (1+s)^2 &\equiv 1 \mod{N}, \tag{C2}\label{eq:C2}\\
  t\,(2 + s) &\equiv 0 \mod{N}, \tag{C3}\label{eq:C3}\\
  c\,(1+\beta) &\equiv 0 \mod{M}, \tag{C4}\label{eq:C4}\\
  t\,(1 + \alpha) &\equiv 0 \mod{N}, \tag{C5}\label{eq:C5}\\
  c\,(2 + a) &\equiv 0 \mod{M}. \tag{C6}\label{eq:C6}
\end{align}
Conversely, every exact product $G = HK$ with
$H \cong \SD_{2^n}$, $K \cong \SD_{2^m}$ and
$\langle x\rangle, \langle z\rangle$ normal in $G$ admits a
presentation of the form~\eqref{eq:reduced-pres} with parameters
satisfying \eqref{eq:C1}--\eqref{eq:C6}.
\end{theorem}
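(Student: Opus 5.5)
The plan is to split the proof into necessity (every such exact product yields a presentation \eqref{eq:reduced-pres} satisfying \eqref{eq:C1}--\eqref{eq:C6}) and sufficiency (every admissible tuple yields one), organised so that the same handful of computations serves both directions.

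\textbf{Necessity.} Let $G=HK$ be exact with $\langle x\rangle,\langle z\rangle\trianglelefteq G$.
\begin{enumerate}[label=\normalfont(\arabic*)]
\item By the discussion preceding \eqref{eq:mixed} and Lemma~\ref{lem:trivial-cores}, $A=\langle x\rangle\times\langle z\rangle\cong\Z_N\times\Z_M$ is abelian. The mixed relations are exactly \eqref{eq:reduced-conj}, namely $z^y=z^{1+a}$, $x^w=x^{1+s}$ and $y^w=yx^tz^c$. So $G$ satisfies the relations of \eqref{eq:reduced-pres}.
\item The relation $y^2=1$ gives $z=z^{y^2}=z^{(1+a)^2}$, hence \eqref{eq:C1}. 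Likewise $w^2=1$ gives \eqref{eq:C2}.
\item Apply $w$ twice to $y$ and use $w^2=1$:
\[
  y=(yx^tz^c)^w=yx^tz^c\,x^{t(1+s)}z^{c\beta}=y\,x^{t(2+s)}z^{c(1+\beta)}.
\]
Since $A$ is a direct product, this splits into \eqref{eq:C3} and \eqref{eq:C4}.
\item Conjugate the relation $y^2=1$ by $w$, so that $(yx^tz^c)^2=1$. Moving $y$ past $x^tz^c$ via $x^y=x^\alpha$ and $z^y=z^{1+a}$ gives
\[
  x^{t(1+\alpha)}z^{c(2+a)}=1,
\]
which splits into \eqref{eq:C5} and \eqref{eq:C6}.
\end{enumerate}
Every step here uses only the fact that $A$ is abelian with $\langle x\rangle\cap\langle z\rangle=1$.

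\textbf{Sufficiency.} Given a tuple satisfying \eqref{eq:C1}--\eqref{eq:C6}, regard \eqref{eq:reduced-pres} as a polycyclic presentation with polycyclic series $G\ge\langle x,z,y\rangle\ge A\ge 1$. The plan is to build it as iterated cyclic extensions, checking at each stage the criterion of \cite[Lemma 2.5]{Eick2000}: a proposed automorphism must preserve the relations, and its power must act as conjugation by the prescribed element.
\begin{enumerate}[label=\normalfont(\arabic*)]
\item Start from $A=\Z_N\times\Z_M$. The map $x\mapsto x^\alpha$, $z\mapsto z^{1+a}$ is an automorphism, since $\alpha$ and $1+a$ are units (the latter by \eqref{eq:C1}). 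Its square is the identity by Lemma~\ref{lem:alpha} and \eqref{eq:C1}. This gives the extension $B=A\langle y\rangle$ of order $2NM$.
\item On $B$, define $\phi$ by $x\mapsto x^{1+s}$, $z\mapsto z^\beta$, $y\mapsto yx^tz^c$. One must check that $\phi$ preserves the defining relations of $B$:
  \begin{itemize}
  \item relations inside $A$: automatic, since $1+s$ and $\beta$ are units;
  \item $x^y=x^\alpha$: both sides become $x^{\alpha(1+s)}$, because $A$ is abelian;
  \item $z^y=z^{1+a}$: both sides become $z^{\beta(1+a)}$;
  \item $y^2=1$: this is exactly \eqref{eq:C5} and \eqref{eq:C6}.
  \end{itemize}
  Since $\phi$ maps $B$ onto itself and $B$ is finite, $\phi$ is an automorphism.
\item Next verify $\phi^2=\mathrm{id}$. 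This follows from \eqref{eq:C2}, from $\beta^2\equiv1\pmod M$ (Lemma~\ref{lem:alpha}), and from \eqref{eq:C3} and \eqref{eq:C4} on the generator $y$. Since $w^2=1$ is required, $\phi(w^2)=w^2$ holds trivially.
\end{enumerate}
Hence the extension $G=B\langle w\rangle$ exists and has order $4NM$.

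It then remains to read off the structure of $G$. The subgroups $\langle x,y\rangle\cong\SD_{2^n}$ and $\langle z,w\rangle\cong\SD_{2^m}$ embed with the correct orders. Their intersection is trivial by the normal form $x^iz^jy^kw^l$, so $|HK|=|G|$ and the product is exact. Finally, $\langle x\rangle$ and $\langle z\rangle$ are normal, which can be read off directly from the relations.

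I expect the main obstacle to be the bookkeeping in the consistency step, in particular confirming that the criterion needs nothing beyond the checks listed above. Beyond the conjugation-compatibility and power conditions, there should be no hidden test involving the pair $y,w$ together, since all such tests are covered by $\phi$ being an automorphism. I would first reduce everything to the four identities already used in the necessity part, so that both directions rest on the same computations.
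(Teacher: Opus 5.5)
Your proposal is correct. The necessity half is the paper's argument: Lemma~\ref{lem:trivial-cores} followed by the identities $(\mathcal{I}_1)$--$(\mathcal{I}_4)$. For sufficiency you take a genuinely different and more elementary route.

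The paper treats \eqref{eq:reduced-pres} as a pc-presentation with generator order $w,y,z,x$ and works through the overlap families (P), (W), (C) of the consistency theorem. You instead construct the group directly as $(A\rtimes\langle y\rangle)\rtimes\langle w\rangle$. All power relations are trivial, so both steps are ordinary semidirect products by automorphisms of order dividing $2$. You therefore need no consistency theorem and no appeal to \cite[Lemma 2.5]{Eick2000}, which the paper uses in its overlap form. You are also right that there is no hidden joint test on $y$ and $w$: $\phi$ being an automorphism of $B$ with $\phi^2=\mathrm{id}$ is the whole requirement.

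The computations are the paper's, regrouped:
\begin{itemize}
\item $\sigma^2=\mathrm{id}$ is the paper's (P)$_{2,3}$ and (P)$_{2,4}$, giving \eqref{eq:C1};
\item $\phi$ respecting $z^y=z^{1+a}$, $x^y=x^\alpha$ and $y^2=1$ is (C)$_{1,2,3}$, (C)$_{1,2,4}$ and (W)$_{1,2}$, the last giving \eqref{eq:C5} and \eqref{eq:C6};
\item $\phi^2=\mathrm{id}$ is (P)$_{1,2}$, (P)$_{1,3}$ and (P)$_{1,4}$, giving \eqref{eq:C2}--\eqref{eq:C4}.
\end{itemize}
Your version makes the existence of $G$ and its order $4NM$ self-evident, and it shows why the list of checks is complete. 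The paper's version is more mechanical and is reused for Theorem~\ref{thm:gen}. Your construction adapts there too, since the power relations stay trivial.

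Two small points to tighten:
\begin{itemize}
\item Relations inside $A$ are preserved simply because $\phi(x)\in\langle x\rangle$ and $\phi(z)\in\langle z\rangle$ commute and have the right orders. That $1+s$ and $\beta$ are units is what gives bijectivity, and bijectivity follows from $\phi^2=\mathrm{id}$ anyway.
\item Defining $\phi$ on generators uses that $B$ is presented by its six defining relations. This follows from the order count: the presented group has at most $2NM$ elements and maps onto the semidirect product, which has exactly $2NM$.
\end{itemize}
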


\subsection{Necessary Condition}

Necessary condition follows by expanding four elementary group-theoretic
identities in the reduced conjugation rules~\eqref{eq:reduced-conj}.

\paragraph{Identity $(\mathcal{I}_1)$: $(z^y)^y = z$, obtained from $y^2=1$.} Notice that
\[
  (z^y)^y = (z^{1+a})^y = z^{(1+a)^2}.
\]
Setting equal to $z$ we have $(1+a)^2\equiv 1\mod{M}$, which is \eqref{eq:C1}.

\paragraph{Identity $(\mathcal{I}_2)$: $(x^w)^w = x$, obtained from $w^2=1$.} Notice that
\[
  (x^w)^w = (x^{1+s})^w = x^{(1+s)^2}.
\]
Setting equal to $x$ we have $(1+s)^2\equiv 1\mod{N}$, which is \eqref{eq:C2}.

\paragraph{Identity $(\mathcal{I}_3)$: $(y^w)^w = y$, obtained from $w^2=1$.}
Notice that
\begin{align*}
  (y^w)^w &= (y\,x^t z^c)^w = y^w \cdot (x^t z^c)^w
        = (y\,x^t z^c) \cdot (x^{t(1+s)}\,z^{c\beta})\\
        &= y \cdot x^{t(2+s)}\,z^{c(1+\beta)},
\end{align*}
where in the last step we used $[x,z]=1$.
Setting equal to $y$ gives \eqref{eq:C3} and \eqref{eq:C4}.

\paragraph{Identity $(\mathcal{I}_4)$: $(y^w)^2 = 1$, obtained from $y^2=1$.}
 Notice that
\begin{align*}
  (y^w)^2 &= (y\,x^t z^c)(y\,x^t z^c)
        = y \cdot (y\cdot y^{-1}\cdot x^t z^c\cdot y) \cdot x^t z^c\\
        &= y^2 \cdot (x^t z^c)^y \cdot x^t z^c
        = (x^{t\alpha}\,z^{c(1+a)})(x^t z^c)\\
        &= x^{t(1+\alpha)}\,z^{c(2+a)}.
\end{align*}
Setting equal to $1$ gives \eqref{eq:C5} and \eqref{eq:C6}. \qed

\begin{corollary}\label{cor:parity}
The parameters $a, s, t, c$ are all even.
\end{corollary}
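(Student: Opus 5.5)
The plan is to read off the parity of each parameter directly from the congruences \eqref{eq:C1}--\eqref{eq:C6} just established in the necessary-condition part of Theorem~\ref{thm:main}. No new group-theoretic input is needed. First I note that parity is well defined on $\Z_N$ and $\Z_M$, because $N=2^{n-1}$ and $M=2^{m-1}$ are even (indeed divisible by $8$ for $n,m\ge 4$).

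For $s$ and $a$, I use \eqref{eq:C2}. It says $u:=1+s$ satisfies $u^2\equiv 1 \pmod N$. Since $N$ is even, $u^2$ is odd, so $u$ is odd, and hence $u$ is a unit of $\Z_N$. Lemma~\ref{prop:four-solutions} then applies: $s=u-1$ is one of the four values $0,\,2^{n-2}-2,\,2^{n-2},\,2^{n-1}-2$ modulo $N$, all of which are even. Alternatively, $u$ odd gives $s$ even directly. The same argument with \eqref{eq:C1} in $\Z_M$ shows that $a$ is even.

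For $t$ and $c$, I use the congruences involving $1+\alpha$ and $1+\beta$; this is the place where the semidihedral arithmetic enters. Since $1+\alpha=2^{n-2}$, condition \eqref{eq:C5} reads $t\cdot 2^{n-2}\equiv 0 \pmod{2^{n-1}}$, which is equivalent to $2\mid t$. Likewise $1+\beta=2^{m-2}$, so \eqref{eq:C4} reads $c\cdot 2^{m-2}\equiv 0\pmod{2^{m-1}}$, which forces $c$ to be even.

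There is no real obstacle. The only point needing care is that evenness of a residue modulo a power of $2$ is a well-defined notion, and this holds because the moduli are even. Conditions \eqref{eq:C3} and \eqref{eq:C6} are not needed.
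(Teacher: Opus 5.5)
Your proof is correct and is the paper's argument: $a$ and $s$ are even because \eqref{eq:C1} and \eqref{eq:C2} force $1+a$ and $1+s$ to be odd, and $t$ and $c$ are even because, with $1+\alpha=2^{n-2}$ and $1+\beta=2^{m-2}$, \eqref{eq:C5} and \eqref{eq:C4} become $2^{n-2}t\equiv 0 \pmod{2^{n-1}}$ and $2^{m-2}c\equiv 0 \pmod{2^{m-1}}$. The detour through Lemma~\ref{prop:four-solutions} is harmless but unnecessary, as you note yourself.
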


\begin{proof}
\eqref{eq:C1} forces $1+a$ odd, hence $a$ even; symmetrically $s$ even
from \eqref{eq:C2}.  From \eqref{eq:C5}, $t\cdot(1+\alpha) = t\cdot 2^{n-2}\equiv 0\mod{N}$,
i.e.\ $2^{n-2}\cdot t\equiv 0\mod{2^{n-1}}$, so  $t$ is even.
Symmetrically $c$ is even from \eqref{eq:C4}.
\end{proof}

\subsection{Sufficient condition via consistency theorem}\label{sec:cons-thm}

We view~\eqref{eq:reduced-pres} as a polycyclic presentation, also
called PC-presentation
(see~\cite[Section~2.3]{Eick2000} or~\cite[Definition~8.7]{HoltEickOBrien2005}),
and verify that it satisfies all the relevant consistency conditions.

Order the generators as
\[
  g_1 := w,\quad g_2 := y,\quad g_3 := z,\quad g_4 := x,
\]
with relative orders $e_1 = e_2 = 2$, $e_3 = M$, $e_4 = N$.  This
ordering corresponds to the subnormal series
\[
  G \,=\, G_0 \;\rhd\;
  G_1 := \langle x, z, y\rangle \;\rhd\;
  G_2 := \langle x, z\rangle = A \;\rhd\;
  G_3 := \langle x\rangle \;\rhd\;
  G_4 := \{1\},
\]
with cyclic quotients of orders $2, 2, M, N$ from top to bottom.
Each element of $G$ has a unique normal form
$g_1^{a_1} g_2^{a_2} g_3^{a_3} g_4^{a_4}
= w^{a_1} y^{a_2} z^{a_3} x^{a_4}$, with $0\le a_i < e_i$.

The conjugate relations (for $i < j$, with the result expressed in
normal form involving generators with index $\geq i$) are the following:
\begin{equation}\label{eq:conj-relations}
\begin{aligned}
  g_2^{g_1} &= y^w = y\,x^t z^c = g_2 \, g_3^c \, g_4^t,\\
  g_3^{g_1} &= z^w = z^\beta = g_3^\beta,\\
  g_4^{g_1} &= x^w = x^{1+s} = g_4^{1+s},\\
  g_3^{g_2} &= z^y = z^{1+a} = g_3^{1+a},\\
  g_4^{g_2} &= x^y = x^\alpha = g_4^\alpha,\\
  g_4^{g_3} &= x^z = x = g_4 \quad (\text{since } [x,z]=1).
\end{aligned}
\end{equation}
The power relations are $g_i^{e_i} = 1$ for all $i$.

By the consistency theorem for polycyclic presentations
(see~\cite[Lemma 2.5]{Eick2000}, with the proof attributed to
\cite[p.~424]{Sims}),
the PC-presentation is consistent - equivalently, defines a group of
order $e_1 e_2 e_3 e_4 = 4NM$ -  if and only if a finite list of
overlap identities, derived from the rewriting rules, reduce to
the same normal form along all possible reduction paths.   In the
general theory, the power relations of a PC-presentation take the
form $g_i^{e_i} = u_i$, where $u_i$ is a fixed word in the later
generators $g_{i+1}, \ldots, g_n$; for the presentation
\eqref{eq:reduced-pres}, the four power relations are
$w^2 = y^2 = z^M = x^N = 1$, so $u_i = 1$ for every~$i$.   Each overlap is reduced in two different ways, using
\emph{only} the conjugation rules~\eqref{eq:conj-relations} (and
never the power relations), and the two results are compared in
normal form.   The consistency conditions reduce to the following
three families of identities, corresponding to the first three families
in~\cite[Lemma 2.5(c)]{Eick2000}.   The fourth family
$(g_i^{e_i}) g_i = g_i (g_i^{e_i})$ in~\cite[Lemma 2.5(c)]{Eick2000} is
automatically satisfied (it is just the associativity
$g_i^{e_i+1} = g_i \cdot g_i^{e_i}$), and the fifth family is empty
in our case because all generators have finite order.

\paragraph{(P) Identities of the form
	$g_j^{g_i^{e_i}} = g_j^{u_i}$, for $i < j$.}
	This says that iterating the conjugation rule $g_j^{g_i}$ a total of
	$e_i$ times must yield the same element as conjugating $g_j$ by the
	word $u_i$.    In our case $u_i = 1$, so the
	right-hand side reduces to $g_j$.
There are six such conditions.   Three reduce to identities
$(\mathcal{I}_1), (\mathcal{I}_2), (\mathcal{I}_3)$ of the
necessary condition proof, giving \eqref{eq:C1}, \eqref{eq:C2},
\eqref{eq:C3}--\eqref{eq:C4} respectively.   The other three are
automatic, as summarised below.

\begin{center}
	\begin{tabular}{lll}
		\toprule
		Pair $(i,j)$ & Reduces to & Outcome \\
		\midrule
		$(1,2)$ & $(y^w)^w = y$ & gives \eqref{eq:C3}, \eqref{eq:C4} \\
		$(1,3)$ & $z^{\beta^2} = z$ & by Lemma~\ref{lem:alpha} \\
		$(1,4)$ & $(x^w)^w = x$ & gives \eqref{eq:C2} \\
		$(2,3)$ & $(z^y)^y = z$ & gives \eqref{eq:C1} \\
		$(2,4)$ & $x^{\alpha^2} = x$ & by Lemma~\ref{lem:alpha} \\
		$(3,4)$ & $x^z = x$ iterated & holds identically \\
		\bottomrule
	\end{tabular}
\end{center}

\paragraph{(W) Identities of the form
	$\bigl(g_j^{g_i}\bigr)^{e_j} = u_j^{g_i}$, for $i < j$.}
	The left-hand side raises the conjugate $g_j^{g_i}$ to the
	power~$e_j$ (i.e.\ $e_j$-fold multiplication). This says that raising the conjugate $g_j^{g_i}$ to the power $e_j$
	must give the same element as conjugating the right-hand side $u_j$
	of the power relation $g_j^{e_j} = u_j$ by $g_i$.   In our case
	$u_j = 1$, so the right-hand side reduces to $1$. 
There are six such conditions.   One
reduces to identity $(\mathcal{I}_4)$ of the necessary condition proof, giving
\eqref{eq:C5}--\eqref{eq:C6}.   The remaining five are automatic
because, with $r = b = 0$, every conjugate $g_j^{g_i}$ for $j \ge 3$
lies entirely in $\langle g_j\rangle$:

\begin{center}
	\begin{tabular}{lll}
		\toprule
		Pair $(i,j)$ & Reduces to & Outcome \\
		\midrule
		$(1,2)$ & $(y^w)^2 = 1$ & gives \eqref{eq:C5}, \eqref{eq:C6} \\
		$(1,3)$ & $(z^\beta)^M = z^{\beta M} = 1$ & from $z^M = 1$ \\
		$(1,4)$ & $(x^{1+s})^N = x^{(1+s)N} = 1$ & from $x^N = 1$ \\
		$(2,3)$ & $(z^{1+a})^M = z^{(1+a)M} = 1$ & from $z^M = 1$ \\
		$(2,4)$ & $(x^\alpha)^N = x^{\alpha N} = 1$ & from $x^N = 1$ \\
		$(3,4)$ & $x^N = 1$ & from $x^N = 1$ \\
		\bottomrule
	\end{tabular}
\end{center}

The crucial observation here is that the rows $(1,4)$ and $(2,3)$ are
automatic precisely because $r = b = 0$ (Lemma~\ref{lem:trivial-cores})
forces the conjugates $x^w$ and $z^y$ to lie in $\langle x\rangle$
and $\langle z\rangle$ respectively, killing any potential cross-term.

\paragraph{(C) Identities of the form
$g_k^{g_j\, g_i} = (g_k^{g_j})^{g_i}$, for $i < j < k$.}
The left-hand side conjugates $g_k$ by the product $g_j\,g_i$;
the right-hand side first conjugates $g_k$ by $g_j$, then by
$g_i$.   The two computations correspond to the two ways of reducing
the overlap $g_k\,g_j\,g_i$ in normal form, and their equality
expresses the associativity of conjugation in the group defined by
the presentation. 
There are four such conditions, all of which hold in our setting
because $A = \langle x, z\rangle$ is abelian.   We verify each by
direct calculation.

\smallskip
$(C_{1,2,3})$: $(z^y)^w = (z^w)^{y^w}$.   On the left side we have
$(z^{1+a})^w = z^{\beta(1+a)}$.   On the right side we have $(z^\beta)^{y\,x^t z^c}$.
Since $z^\beta \in A$, and conjugation of an element of $A$ by an
$A$-coset representative depends only on the coset, and $y\,x^t z^c$
lies in the coset of $y$:
\[
(z^\beta)^{y\,x^t z^c} = (z^\beta)^y = (z^y)^\beta = z^{(1+a)\beta}.
\]
The two sides coincide without further constraint.

\smallskip
$(C_{1,2,4})$: $(x^y)^w = (x^w)^{y^w}$.   On the left side we have
$(x^\alpha)^w = x^{\alpha(1+s)}$.  On the right side we have $(x^{1+s})^{y\,x^t z^c}$.
Again, conjugation by the coset representative $y$ yields
$(x^{1+s})^y = x^{\alpha(1+s)}$.   The two sides coincide.

\smallskip
$(C_{1,3,4})$: $(x^z)^w = (x^w)^{z^w}$.   On the left side we have $x^w = x^{1+s}$.
On the right side we have $(x^{1+s})^{z^\beta} = x^{1+s}$ since $A$ is abelian.   The
two sides coincide.

\smallskip
$(C_{2,3,4})$: $(x^z)^y = (x^y)^{z^y}$.   On the left side we have $x^y = x^\alpha$.
On the right we have $(x^\alpha)^{z^{1+a}} = x^\alpha$ since $A$ is abelian.   The
two sides coincide.

\smallskip

Summarising:

\begin{center}
	\begin{tabular}{lll}
		\toprule
		Triple $(i,j,k)$ & Reduces to & Outcome \\
		\midrule
		$(1,2,3)$ & $(z^y)^w = (z^w)^{y^w}$ & from $[x,z] = 1$ \\
		$(1,2,4)$ & $(x^y)^w = (x^w)^{y^w}$ & from $[x,z] = 1$ \\
		$(1,3,4)$ & $(x^z)^w = (x^w)^{z^w}$ & from $[x,z] = 1$ \\
		$(2,3,4)$ & $(x^z)^y = (x^y)^{z^y}$ & from $[x,z] = 1$ \\
		\bottomrule
	\end{tabular}
\end{center}

\subsection{Completing the proof}

By the consistency theorem for polycyclic presentations
(\cite[Lemma 2.5]{Eick2000} and~\cite[p.~424]{Sims}), the
PC-presentation \eqref{eq:reduced-pres} satisfies
all consistency conditions, hence defines a group $G$ of order exactly
$e_1 e_2 e_3 e_4 = 4NM$.

The subgroup $H = \langle x, y\rangle$ consists of elements whose
normal form has $a_1 = a_3 = 0$ (i.e.\ no $w$ or $z$ in normal form),
so $|H| = 2N$.   Combined with the imposed relations $x^N = y^2 = 1$,
$x^y = x^\alpha$, this gives $H\cong\SD_{2^n}$.   Symmetrically
$K\cong\SD_{2^m}$.   The intersection $H\cap K$ has both
$a_1 = a_3 = 0$ (from $H$) and $a_2 = a_4 = 0$ (from $K$), so it is
trivial.   Therefore $|HK| = |H|\cdot|K|/|H\cap K| = 4NM = |G|$,
and $G = HK$ is an exact product.

Finally, the conjugation rules $x^y = x^\alpha$, $x^w = x^{1+s}$,
$z^y = z^{1+a}$, $z^w = z^\beta$ keep $\langle x\rangle$ and
$\langle z\rangle$ invariant, so both are normal in $G$,
verifying that $\langle x\rangle$ and $\langle z\rangle$ are normal in $G$. \qed

\section{Counter-examples to ``$s=0$ when $n=m$''}\label{sec:examples}

Theorem~\ref{thm:main} parameterises the exact products $G = HK$ in
terms of the four-tuple $(a, s, t, c)$ subject to the six
congruences~\eqref{eq:C1}--\eqref{eq:C6}.   When the two factors have
the same order, i.e.\ $n = m$, the symmetry between $H$ and $ K$
might naively suggest a corresponding symmetry on the parameters,
forcing in particular the value $s = 0$ in every valid tuple.   
The purpose of this section is to
disprove this expectation. We show that the value $s = 2^{n-2}-2$
occurs in valid exact products even when $n = m$,
and that, in fact, the case $s = 0$ is the exception rather than the
rule.   This justifies the fact that the formulation of
Theorem~\ref{thm:main} cannot be tightened along this apparently
natural direction.

\begin{proposition}\label{prop:counter-s}
For $n = m = 4$, the tuple $(a, s, t, c) = (0, 2, 0, 0)$ defines an
exact product $G = HK$ with $H, K\cong\SD_{16}$, $|G| = 256$,
$\langle x\rangle,\langle z\rangle\trianglelefteq G$, and the
non-trivial value $s = 2$.
\end{proposition}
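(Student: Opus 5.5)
The plan is to apply Theorem~\ref{thm:main} directly with $n=m=4$, so that $N=M=8$ and $\alpha=\beta=2^{2}-1=3$. By the converse direction of that theorem (proved via the consistency theorem for polycyclic presentations), it suffices to check that the tuple $(a,s,t,c)=(0,2,0,0)$ satisfies the six congruences \eqref{eq:C1}--\eqref{eq:C6}. Then the presentation \eqref{eq:reduced-pres} defines an exact product $G=HK$ with $|G|=4NM=256$, $H\cong K\cong\SD_{16}$, and $\langle x\rangle,\langle z\rangle\trianglelefteq G$.

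The verification runs through the congruences in order.
\begin{itemize}
\item \eqref{eq:C1}: $(1+0)^2=1\equiv 1\pmod 8$.
\item \eqref{eq:C2}: $(1+2)^2=9\equiv 1\pmod 8$. This is exactly the ``missing solution'' $u=-1+2^{n-2}=3$ of Lemma~\ref{prop:four-solutions}, noted in Remark~\ref{rem:missing-solution}.
\item \eqref{eq:C3}, \eqref{eq:C5}: both have the factor $t=0$, so they hold trivially.
\item \eqref{eq:C4}, \eqref{eq:C6}: both have the factor $c=0$, so they hold trivially.
\end{itemize}

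The theorem already gives the order $256$, the isomorphism types of $H$ and $K$, exactness, and the normality of both cyclic subgroups. What remains is to record that $s=2\ne 0$ in $\Z_8$. Concretely, $x^w=x^{3}\neq x$, so $w$ acts nontrivially on $\langle x\rangle$. As an optional sanity check, one could describe $G$ explicitly as $(\langle x\rangle\times\langle z\rangle)\rtimes\langle y,w\rangle$, where $y$ and $w$ commute (since $t=c=0$) and act by $(x,z)\mapsto(x^3,z)$ and $(x,z)\mapsto(x^3,z^3)$ respectively.

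There is no real obstacle here. The only point needing care is that the converse of Theorem~\ref{thm:main} really applies: we need the consistency argument to depend on nothing beyond \eqref{eq:C1}--\eqref{eq:C6}, which is what the tables in Section~\ref{sec:cons-thm} establish.
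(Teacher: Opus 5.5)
Your proposal is correct and follows the paper's proof: check \eqref{eq:C1}--\eqref{eq:C6} for $(a,s,t,c)=(0,2,0,0)$ with $N=M=8$ and $\alpha=\beta=3$, then invoke the converse direction of Theorem~\ref{thm:main}. Your optional description of $G$ as $(\Z_8\times\Z_8)\rtimes(\Z_2\times\Z_2)$, with commuting diagonal involutive actions, can be checked by hand and replaces the paper's GAP verification, since it gives $|G|=256$, exactness and normality of $\langle x\rangle,\langle z\rangle$ without using the consistency theorem.
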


\begin{proof}
The tuple satisfies the system~\eqref{eq:C1}--\eqref{eq:C6}:
\begin{itemize}
  \item \eqref{eq:C1}: $(1+0)^2 = 1 \equiv 1\mod{8}$.
  \item \eqref{eq:C2}: $(1+2)^2 = 9\equiv 1\mod{8}$.
  \item \eqref{eq:C3}: $0\cdot(2+2) = 0$.
  \item \eqref{eq:C4}: $0\cdot(1+3) = 0$.
  \item \eqref{eq:C5}: $0\cdot(1+3) = 0$.
  \item \eqref{eq:C6}: $0\cdot(2+0) = 0$.
\end{itemize}
By Theorem~\ref{thm:main}, the corresponding presentation defines a
valid exact product.   Independently, using the computer algebra
system~GAP~\cite{GAP}, we have constructed $G$ explicitly as a
permutation group on a set of $256$ elements and verified
$|G| = 256$, $|H| = |K| = 16$, $H\cap K = \{1\}$,
$\langle x\rangle,\langle z\rangle\trianglelefteq G$, and the
defining relations.
\end{proof}

\begin{remark}\label{rem:s-counts}
For $n = m = 4$, an exhaustive enumeration carried out with~GAP~\cite{GAP}
shows that, of the $144$ valid tuples $(a, s, t, c)$, exactly
$24$ have $s = 0$, and the remaining $120$ have $s\in\{2, 4, 6\}$.
The distribution of $s$ is:
\[
  s = 0 \text{ in } 24 \text{ tuples },\quad s = 2 \text{ in } 48 \text{ tuples },\quad s = 4 \text{ in } 24 \text{ tuples },\quad s = 6 \text{ in }  48 \text{ tuples }.
\]
Far from being exceptional, the tuples with $s\ne 0$ thus account for
the great majority of the classification.   This phenomenon is one of
the main qualitative differences from the dihedral classification
of~\cite{HuYu2025}, where the analogous parameter is forced to vanish
when the two factors have the same order.
\end{remark}

\section{Generalisation to arbitrary cores}\label{sec:nontrivial}

We now extend Theorem~\ref{thm:main} to the case in which the cores
of $\langle x\rangle$ and $\langle z\rangle$ in~$G$ are arbitrary
subgroups
\[
  \langle x\rangle^G = \langle x^{n_1}\rangle, \qquad
  \langle z\rangle^G = \langle z^{m_1}\rangle,
\]
where $n_1 \mid N$ and $m_1 \mid M$ are powers of~$2$, say $n_1 = 2^\nu$
($0 \le \nu \le n-1$) and $m_1 = 2^\mu$ ($0 \le \mu \le m-1$).      The
case of Theorem~\ref{thm:main} corresponds to
$n_1 = m_1 = 1$, i.e.\ $\nu = \mu = 0$.

Throughout this section we restrict to presentations in which
$[x, z] = 1$.   The case $[x, z] \ne 1$ is discussed in
Section~\ref{sec:xz-nontrivial}.

\begin{theorem}\label{thm:gen}
Let $n, m \ge 4$, and let $n_1 = 2^\nu \mid N$, $m_1 = 2^\mu \mid M$.
The presentation
\begin{equation}\label{eq:pres-gen}
  G = \left\langle x, y, z, w\,\middle|\,
  \begin{array}{l}
    x^N = y^2 = z^M = w^2 = 1,\ [x,z] = 1, \\
    x^y = x^\alpha,\ z^w = z^\beta, \\
    {[z,y]} = x^r z^a,\
    {[x,w]} = x^s z^b,\
    {[y,w]} = x^t z^c
  \end{array}
  \right\rangle
\end{equation}
defines an exact product $G = HK$ with $H = \langle x\rangle \rtimes\langle y\rangle \cong \SD_{2^n}$,
$K = \langle z\rangle \rtimes \langle w\rangle \cong \SD_{2^m}$, $H \cap K = \{1\}$, and
\begin{equation}\label{eq:cores}
  \langle x\rangle^G = \langle x^{n_1}\rangle,
  \qquad
  \langle z\rangle^G = \langle z^{m_1}\rangle,
\end{equation}
if and only if the six parameters $(r, a, s, b, t, c) \in
\Z_N \times \Z_M \times \Z_N \times \Z_M \times \Z_N \times \Z_M$ satisfy:
\begin{enumerate}[label=\textnormal{(\roman*)}]
  \item the twelve congruences
  \begin{align}
    r\,(\alpha + 1 + a) &\equiv 0 \mod{N}, \tag{D1}\label{eq:D1}\\
    (1+a)^2 &\equiv 1 \mod{M}, \tag{D2}\label{eq:D2}\\
    (1+s)^2 &\equiv 1 \mod{N}, \tag{D3}\label{eq:D3}\\
    b\,(1+s+\beta) &\equiv 0 \mod{M}, \tag{D4}\label{eq:D4}\\
    r\,(\beta - 1 - s) &\equiv 0 \mod{N}, \tag{D5}\label{eq:D5}\\
    b\,r &\equiv 0 \mod{M}, \tag{D6}\label{eq:D6}\\
    b\,(\alpha - 1 - a) &\equiv 0 \mod{M}, \tag{D7}\label{eq:D7}\\
    r\,b &\equiv 0 \mod{N}, \tag{D8}\label{eq:D8}\\
    t\,(2 + s) &\equiv 0 \mod{N}, \tag{D9}\label{eq:D9}\\
    c\,(1+\beta) + t\,b &\equiv 0 \mod{M}, \tag{D10}\label{eq:D10}\\
    t\,(1 + \alpha) + c\,r &\equiv 0 \mod{N}, \tag{D11}\label{eq:D11}\\
    c\,(2 + a) &\equiv 0 \mod{M}; \tag{D12}\label{eq:D12}
  \end{align}
  \item the additive order of $r$ in $\Z_N$ equals $m_1$;
  \item the additive order of $b$ in $\Z_M$ equals $n_1$.
\end{enumerate}
\end{theorem}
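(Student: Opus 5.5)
The proof will follow the template of Theorem~\ref{thm:main}. The necessary conditions come from expanding group identities in $G$. The sufficiency comes from the consistency theorem for polycyclic presentations (\cite[Lemma 2.5]{Eick2000}, \cite[p.~424]{Sims}). The new ingredient is the core computation.

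The basic tool is that $A=\langle x,z\rangle\cong \Z_N\times\Z_M$ is abelian because $[x,z]=1$. From the presentation we have
\[
z^y=x^rz^{1+a},\qquad x^w=x^{1+s}z^b,\qquad y^w=yx^tz^c .
\]
Since conjugation by $y$ or $w$ preserves $A$, these formulas describe $y$ and $w$ as endomorphisms of $A$, given by the $2\times 2$ integer matrices
\[
Y=\begin{pmatrix}\alpha & r\\ 0 & 1+a\end{pmatrix},\qquad
W=\begin{pmatrix}1+s & b\\ 0\cdot & \beta\end{pmatrix},
\]
with the obvious row convention (rows record images of $x$ and $z$). Here $Y$ fixes $\langle x\rangle$ and $W$ fixes $\langle z\rangle$.

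\textbf{Necessity.} We expand four kinds of identity.
\begin{enumerate}[label=(\alph*)]
\item $(z^y)^y=z$ and $(x^w)^w=x$, i.e.\ $Y^2=W^2=1$ on $A$. The $Y$ identity gives \eqref{eq:D1} and \eqref{eq:D2}; the $W$ identity gives \eqref{eq:D3} and \eqref{eq:D4}.
\item The overlap $(z^y)^w=(z^w)^{y^w}$, which expresses compatibility of the two actions via $y^w=yx^tz^c$. Because $A$ is abelian, conjugation by $yx^tz^c$ on $A$ equals conjugation by $y$. So this identity is the matrix relation $YW=WY$ evaluated on $z$, and it yields \eqref{eq:D5} and \eqref{eq:D6}.
\item The same relation $YW=WY$ evaluated on $x$, i.e.\ $(x^y)^w=(x^w)^{y^w}$, which yields \eqref{eq:D7} and \eqref{eq:D8}.
\item $(y^w)^w=y$ gives $(x^tz^c)(x^tz^c)^w=1$, which is \eqref{eq:D9} and \eqref{eq:D10}. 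Finally $(y^w)^2=1$ gives $(x^tz^c)^y\,x^tz^c=1$, which is \eqref{eq:D11} and \eqref{eq:D12}.
\end{enumerate}
The cross terms $tb$ and $cr$ now appear, where they vanished in the proof of Theorem~\ref{thm:main}.

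\textbf{Sufficiency.} We order the generators $w,y,z,x$ as before and go through the families (P), (W), (C).
\begin{itemize}
\item (P) and (C) reduce to the identities already listed.
\item The rows of (W) for $j\ge3$ are no longer trivial. For example $(x^w)^N=x^{(1+s)N}z^{bN}$. Each such row holds automatically because $A$ is abelian of exponent dividing $N$ or $M$ respectively, so all $N$-th and $M$-th powers vanish.
\end{itemize}
The order count and the argument that $H\cap K=1$ then go through verbatim.

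\textbf{Cores.} This is the step I expect to be the main obstacle. I would handle $\langle x\rangle^G$ first, using the following steps.
\begin{enumerate}[label=(\arabic*)]
\item The core is the largest subgroup of $\langle x\rangle$ normalized by $w$, since $y$ and $z$ already normalize $\langle x\rangle$.
\item For a subgroup $\langle x^k\rangle$ we have $(x^k)^w=x^{k(1+s)}z^{kb}$. Hence $\langle x^k\rangle$ is $w$-invariant iff $kb\equiv0\pmod M$.
\item The cyclic subgroups of the $2$-group $\langle x\rangle$ form a chain. So the core is $\langle x^{n_1}\rangle$ with $n_1$ the least power of $2$ satisfying $n_1b\equiv 0$, i.e.\ $n_1=\ord_{\Z_M}(b)$.
\end{enumerate}
Symmetrically, $\langle z\rangle^G=\langle z^{m_1}\rangle$ with $m_1=\ord_{\Z_N}(r)$. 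This gives conditions (ii) and (iii).

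The delicate points are the following.
\begin{itemize}
\item The congruences \eqref{eq:D6} and \eqref{eq:D8} must come out of the matrix commutation correctly. This requires care, since $r$ lives mod $N$ while $b$ lives mod $M$.
\item One must check that the order conditions are compatible with the twelve congruences.
\end{itemize}
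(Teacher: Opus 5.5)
Your plan follows the paper's proof, with one step that fails as written. The shared structure:
\begin{itemize}
\item For necessity you use the same identities: your $Y^2=W^2=1$, $YW=WY$ and the two $y^w$ identities are the paper's $(\mathcal I_1)$--$(\mathcal I_4)$, $(\mathcal I_3')$, $(\mathcal I_4')$.
\item For sufficiency you use the same ordering $w,y,z,x$ and the same families (P), (W), (C).
\item For the cores you use the same $w$-invariance argument as Lemma~\ref{lem:order-rb}.
\end{itemize}

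\textbf{The failing step.} You assert that the (W) rows with $j\ge 3$ hold automatically because $A$ has exponent dividing $N$ or $M$. But $A\cong\Z_N\times\Z_M$ has exponent $\max(N,M)$. The two non-trivial rows read
\[
(x^w)^N = x^{(1+s)N}z^{bN}=z^{bN},\qquad (z^y)^M = x^{rM}z^{(1+a)M}=x^{rM},
\]
and these vanish only if $bN\equiv 0\pmod M$ and $rM\equiv 0\pmod N$. For arbitrary $r,b$ one of them fails whenever $N\neq M$; for example, $N<M$ with $b$ odd gives $z^{bN}\neq 1$. So these two rows must be derived from the hypotheses.

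The paper does this with the order conditions. Since $\ord_{\Z_M}(b)=n_1$ divides $N$, we get $bN\equiv 0\pmod M$. Since $\ord_{\Z_N}(r)=m_1$ divides $M$, we get $rM\equiv 0\pmod N$. Alternatively, a $2$-adic valuation count shows that \eqref{eq:D2} with \eqref{eq:D7}, and \eqref{eq:D3} with \eqref{eq:D5}, already force these divisibilities.

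\textbf{The same issue in your matrix model.} The off-diagonal entries of $Y$ and $W$ define homomorphisms $\Z_M\to\Z_N$, $j\mapsto rj$, and $\Z_N\to\Z_M$, $i\mapsto bi$. These are well defined only when $rM\equiv 0\pmod N$ and $bN\equiv 0\pmod M$.
\begin{itemize}
\item In the necessity direction this comes for free: $(x^w)^N=(x^N)^w=1$ forces $z^{bN}=1$, and similarly for $r$.
\item In the sufficiency direction you must establish it before computing with $Y$ and $W$.
\end{itemize}
This also resolves your ``delicate point'': $br \bmod M$ in \eqref{eq:D6} and $rb \bmod N$ in \eqref{eq:D8} are well defined precisely because of these two divisibilities.

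Finally, you do not need a separate check that (ii)--(iii) are ``compatible'' with \eqref{eq:D1}--\eqref{eq:D12}, since the statement is an equivalence.
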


\begin{remark}\label{rem:thm-main-cor}
Theorem~\ref{thm:main} is the special case $n_1 = m_1 = 1$ of
Theorem~\ref{thm:gen}.   Indeed, conditions~(ii)--(iii) then force
$r = b = 0$, whereupon \eqref{eq:D1} and
\eqref{eq:D4}--\eqref{eq:D8} become trivial, and the remaining
six conditions \eqref{eq:D2}, \eqref{eq:D3}, \eqref{eq:D9},
\eqref{eq:D10}, \eqref{eq:D11}, \eqref{eq:D12} reduce to
\eqref{eq:C1}--\eqref{eq:C6}.
\end{remark}

\begin{remark}\label{rem:mixed-terms}
	The terms $tb$ in \eqref{eq:D10} and $cr$ in \eqref{eq:D11} arise
	from the mixed contributions of $r$ and $b$ in the conjugation rules
	$z^y = x^r z^{1+a}$ and $x^w = x^{1+s} z^b$, and were absent in the
	analogous conditions \eqref{eq:C4} and \eqref{eq:C5} of
	Theorem~\ref{thm:main} due to the hypothesis $r = b = 0$.
\end{remark}

\subsection{Necessity of the order conditions}

The following lemma is the heart of the generalisation.
It is the semidihedral analogue of the corresponding conditions in
the dihedral classification of~\cite{HuYu2025}.

\begin{lemma}\label{lem:order-rb}
$\langle x^{n_1}\rangle$ is the core of $\langle x\rangle$ in $G$
if and only if $b$ has additive order exactly~$n_1$ in $\Z_M$.
Symmetrically, $\langle z^{m_1}\rangle$ is the core of $\langle z\rangle$
in $G$ if and only if $r$ has additive order exactly~$m_1$ in $\Z_N$.
\end{lemma}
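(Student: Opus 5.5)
Throughout I work in the group $G$ of~\eqref{eq:pres-gen}, so $[x,z]=1$ and $A=\langle x\rangle\times\langle z\rangle\cong\Z_N\times\Z_M$ is abelian. The plan is to compute the core $\langle x\rangle^G=\bigcap_{g\in G}\langle x\rangle^g$ explicitly. Since $G=HK$ and $\langle x\rangle\trianglelefteq H$, every conjugate of $\langle x\rangle$ has the form $\langle x\rangle^k$ with $k\in K$. Hence the core is the largest subgroup $\langle x^{d}\rangle$ of $\langle x\rangle$ that is invariant under conjugation by $z$ and by $w$.

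Conjugation by $z$ is trivial on $\langle x\rangle$ because $[x,z]=1$. So the only condition is invariance under $w$. From $x^w=x^{1+s}z^b$ and commutativity of $A$ we get
\[
(x^{d})^w=x^{d(1+s)}z^{db}.
\]
This lies in $\langle x\rangle$ exactly when $z^{db}=1$, i.e.\ when $db\equiv 0 \pmod M$. The subgroups of the cyclic $2$-group $\langle x\rangle$ form a chain $\langle x^{2^i}\rangle$. The condition $2^i b\equiv 0\pmod M$ holds precisely when $\ord_{\Z_M}(b)\mid 2^i$. Therefore the largest $w$-invariant subgroup is $\langle x^{\ord(b)}\rangle$, and $\langle x\rangle^G=\langle x^{n_1}\rangle$ if and only if $\ord_{\Z_M}(b)=n_1$. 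Here I use that two subgroups $\langle x^{2^i}\rangle$, $\langle x^{2^j}\rangle$ with $2^i,2^j\mid N$ coincide only when $i=j$.

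Two small points need care. First, $\ord(b)\mid M$, whereas $n_1\mid N$, so a priori $\ord(b)$ could exceed $N$. This does not happen: $x^{N}=1$, so $\langle x^{N}\rangle=1$ is always $w$-invariant, and hence $N\cdot b\equiv0\pmod M$. I can deduce this directly from the displayed formula applied with $d=N$. Second, invariance of a subgroup $\langle x^d\rangle$ under $w$ also needs $w^{-1}$. This is automatic because $w^2=1$, or simply because conjugation is injective and the group is finite.

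The statement for $\langle z\rangle$ is symmetric. Conjugates of $\langle z\rangle$ come from $H$, conjugation by $x$ is trivial, and $z^y=z\,x^rz^a=x^rz^{1+a}$ gives $(z^{d})^y=x^{dr}z^{d(1+a)}$. So $\langle z\rangle^G=\langle z^{\ord_{\Z_N}(r)}\rangle$.

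I expect the only real subtlety to be the reduction from the intersection of all conjugates to invariance under generators. I will handle it by noting that a subgroup of $\langle x\rangle$ invariant under $z$ and $w$ is normal in $G$: it is invariant under all of $K$, and under $H$ because $H$ normalises $\langle x\rangle$ and every subgroup of a cyclic group is characteristic. The core is the largest normal subgroup of $G$ contained in $\langle x\rangle$, so this identification suffices.
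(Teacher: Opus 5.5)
Your proposal is correct and follows essentially the same route as the paper. Both arguments reduce normality of a subgroup $\langle x^d\rangle\le\langle x\rangle$ to invariance under conjugation by $w$, compute $(x^d)^w = x^{d(1+s)}z^{db}$ in the abelian group $A$, and identify the core as the largest $\langle x^d\rangle$ with $db\equiv 0 \pmod{M}$. The difference is only in packaging: you compute the core directly as $\langle x^{\ord_{\Z_M}(b)}\rangle$, including the extra check that $\ord_{\Z_M}(b)\mid N$, whereas the paper proves the two implications separately by a maximality/contradiction argument.
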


\begin{proof}
We prove the statement about~$b$; the one about~$r$ is symmetric.

\emph{($\Leftarrow$)}\quad Suppose $b$ has additive order~$n_1$ in
$\Z_M$.   We first show that $\langle x^{n_1}\rangle$ is normal in~$G$.
Among the generators of~$G$, the only one whose conjugation might fail
to normalise $\langle x^{n_1}\rangle$ is~$w$, since $y$, $x$, $z$ all
map $\langle x\rangle$ to itself (the first via $x^y = x^\alpha$, the
other two trivially).   We compute
\[
  (x^{n_1})^w = (x^w)^{n_1} = (x^{1+s} z^b)^{n_1}
  = x^{n_1(1+s)}\,z^{n_1 b}
\]
using $[x,z] = 1$.   Since $b$ has additive order~$n_1$, $n_1 b \equiv 0
\mod{M}$, so $z^{n_1 b} = 1$, and $(x^{n_1})^w \in \langle x\rangle$.
The exponent~$n_1(1+s)$ is a multiple of~$n_1$, so
$(x^{n_1})^w \in \langle x^{n_1}\rangle$.   Hence
$\langle x^{n_1}\rangle$ is normal in~$G$.

Now suppose, for contradiction, that $\langle x^d\rangle$ is a normal
subgroup of $G$ contained in $\langle x\rangle$ for some proper divisor
$d \mid n_1$, $d < n_1$.   The same computation gives
$(x^d)^w = x^{d(1+s)} z^{d b}$, which lies in $\langle x^d\rangle
\subseteq \langle x\rangle$, since $\langle x^d\rangle$ is normal, forcing $d b \equiv 0 \mod{M}$.
But this contradicts the additive order of~$b$ being exactly~$n_1$.
Hence $\langle x^{n_1}\rangle$ is the \emph{largest} normal subgroup of
$G$ contained in $\langle x\rangle$, i.e.\ it is the core.

\emph{($\Rightarrow$)}\quad Conversely, suppose
$\langle x^{n_1}\rangle = \langle x\rangle^G$.   Then
$\langle x^{n_1}\rangle$ is normal in $G$, so by the computation above,
$n_1 b \equiv 0 \mod{M}$, i.e.\ the additive order of~$b$ divides~$n_1$.
If the additive order were a proper divisor $d \mid n_1$ with
$d < n_1$, then by the same computation $\langle x^d\rangle$ would also
be normal in~$G$, contradicting maximality of $\langle x^{n_1}\rangle$.
Hence the order is exactly~$n_1$.
\end{proof}

\subsection{Proof of Theorem~\ref{thm:gen}}

First we prove the necessary condition.  Conditions
\eqref{eq:D1}--\eqref{eq:D12} arise from the same four identities
$(\mathcal{I}_1)$--$(\mathcal{I}_4)$ of the proof of
Theorem~\ref{thm:main}, together with two additional identities
$(\mathcal{I}_3')$ and $(\mathcal{I}_4')$, namely
\[
(z^w)^y = (z^y)^w
\qquad\text{and}\qquad
(x^y)^w = (x^w)^y.
\]
These hold in $G$ because, under our hypothesis $[x,z] = 1$, the
subgroup $A = \langle x, z\rangle$ is abelian, and a direct
computation shows that both $z$ and $x$ commute with the mixed
commutator $[y, w] = x^t z^c \in A$; equivalently,
$z^{wy} = z^{yw}$ and $x^{wy} = x^{yw}$.
In the case treated in Theorem~\ref{thm:main}, with
$\langle x\rangle, \langle z\rangle \trianglelefteq G$, one has
$r = b = 0$ and computing each side of the two identities in normal
form yields the same expression trivially; here, where $r$ and $b$
are allowed to be non-zero, computing each side via the conjugation
rules~\eqref{eq:mixed} and equating the $x$- and $z$-components
yields the four congruences \eqref{eq:D5}--\eqref{eq:D8}.   All
verifications below are direct algebraic expansions in the abelian
group $A = \langle x, z\rangle$.

We compute the two new identities; the others are unchanged from
Section~\ref{sec:main-thm} apart from the appearance of non-zero $r$
and $b$ in the cross-terms.

\paragraph{Identity $(\mathcal{I}_3')$: $(z^w)^y = (z^y)^w$.}
We compute the two sides separately by iterating the conjugation
rules~\eqref{eq:mixed}:
\begin{align*}
  (z^w)^y &= (z^\beta)^y = (z^y)^\beta = (x^r z^{1+a})^\beta
        = x^{r\beta}\,z^{\beta(1+a)}, \\
  (z^y)^w &= (x^r z^{1+a})^w = (x^w)^r\,(z^w)^{1+a}
        = (x^{1+s} z^b)^r\,z^{\beta(1+a)}
        = x^{r(1+s)}\,z^{br + \beta(1+a)}.
\end{align*}
Equating the $x$- and $z$-components yields \eqref{eq:D5} and
\eqref{eq:D6} respectively.

\paragraph{Identity $(\mathcal{I}_4')$: $(x^y)^w = (x^w)^y$.}
Similarly:
\begin{align*}
  (x^y)^w &= (x^\alpha)^w = (x^w)^\alpha = (x^{1+s} z^b)^\alpha
        = x^{\alpha(1+s)}\,z^{\alpha b}, \\
  (x^w)^y &= (x^{1+s} z^b)^y = (x^y)^{1+s}\,(z^y)^b
        = x^{\alpha(1+s)}\,(x^r z^{1+a})^b
        = x^{\alpha(1+s)+rb}\,z^{(1+a)b}.
\end{align*}
Equating yields \eqref{eq:D7} and \eqref{eq:D8}.

\paragraph{Identity $(\mathcal{I}_1)$ revisited.}
We had $(z^y)^y = x^{r(\alpha+1+a)}\,z^{(1+a)^2}$, equating to $z$.
With $r$ possibly non-zero, the $x$-component is no longer
automatically zero, so we recover both \eqref{eq:D1} and \eqref{eq:D2}.

\paragraph{Identity $(\mathcal{I}_2)$ revisited.}
Similarly $(x^w)^w = x^{(1+s)^2}\,z^{b(1+s+\beta)}$, yielding
\eqref{eq:D3} and \eqref{eq:D4}.

\paragraph{Identities $(\mathcal{I}_5)$ and $(\mathcal{I}_6)$.}
The computations of $(y^w)^w$ and $(y^w)^2$ are analogous to those in
the proof of Theorem~\ref{thm:main}, with the additional mixed terms
$tb$ and $cr$ arising from the now non-zero values of $b$ and $r$
(see Remark~\ref{rem:mixed-terms}).   They yield \eqref{eq:D9},
\eqref{eq:D10}, \eqref{eq:D11}, \eqref{eq:D12}.\\
The necessity of conditions~(ii) and~(iii) is exactly the content of
Lemma~\ref{lem:order-rb}.

Finally we prove the sufficient condition.
We verify the consistency of the polycyclic presentation as in
the proof of Theorem~\ref{thm:main}.   The generator order
$g_1 = w, g_2 = y, g_3 = z, g_4 = x$ and relative orders $(2, 2, M, N)$
are unchanged.   The conjugate relations now are
\begin{align*}
  g_2^{g_1} &= y^w = g_2\,g_3^c\,g_4^t,\\
  g_3^{g_1} &= z^w = g_3^\beta,\\
  g_4^{g_1} &= x^w = g_3^b\,g_4^{1+s}\\
  g_3^{g_2} &= z^y = g_3^{1+a}\,g_4^r\\
  g_4^{g_2} &= x^y = g_4^\alpha,\\
  g_4^{g_3} &= x^z = g_4.
\end{align*}

The consistency conditions for the PC-presentation, organised into
the three families (P), (W) and (C) of Subsection~\ref{sec:cons-thm},
are summarised in Table~\ref{tab:hall-gen} 
There are six conditions of
type (P), six of type (W), and four of type (C).

\begin{table}[h]
\centering
\begin{tabular}{l@{\quad}l@{\quad}l}
\toprule
Family & Pair / triple & Outcome \\
\midrule
(P)$_{1,2}$ & $(y^w)^w = y$ & gives \eqref{eq:D9}, \eqref{eq:D10} \\
(P)$_{1,3}$ & $z^{\beta^2} = z$ & by Lemma~\ref{lem:alpha} \\
(P)$_{1,4}$ & $(x^w)^w = x$ & gives \eqref{eq:D3}, \eqref{eq:D4} \\
(P)$_{2,3}$ & $(z^y)^y = z$ & gives \eqref{eq:D1}, \eqref{eq:D2} \\
(P)$_{2,4}$ & $x^{\alpha^2} = x$ & by Lemma~\ref{lem:alpha} \\
(P)$_{3,4}$ & $x^{z^M} = x$ & from $[x,z] = 1$ \\
\midrule
(W)$_{1,2}$ & $(y^w)^2 = 1$ & gives \eqref{eq:D11}, \eqref{eq:D12} \\
(W)$_{1,3}$ & $(z^\beta)^M = 1$ & from $z^M = 1$ \\
(W)$_{1,4}$ & $(x^w)^N = 1$ & from $x^N = 1$ and condition~(iii) \\
(W)$_{2,3}$ & $(z^y)^M = 1$ & from $z^M = 1$ and condition~(ii) \\
(W)$_{2,4}$ & $(x^\alpha)^N = 1$ & from $x^N = 1$ \\
(W)$_{3,4}$ & $x^N = 1$ & from $x^N = 1$ \\
\midrule
(C)$_{1,2,3}$ & $(z^w)^y = (z^y)^w$ & gives \eqref{eq:D5}, \eqref{eq:D6} \\
(C)$_{1,2,4}$ & $(x^y)^w = (x^w)^y$ & gives \eqref{eq:D7}, \eqref{eq:D8} \\
(C)$_{1,3,4}$ & $(x^z)^w = (x^w)^z$ & from $[x,z] = 1$ \\
(C)$_{2,3,4}$ & $(x^z)^y = (x^y)^z$ & from $[x,z] = 1$ \\
\bottomrule
\end{tabular}
\caption{Consistency conditions for the PC-presentation
\eqref{eq:pres-gen} and their reductions to congruences.}\label{tab:hall-gen}
\end{table}

We justify the two entries in family~(W) that rely on the order
conditions of Theorem~\ref{thm:gen}:

\emph{Condition (W)$_{1,4}$}:
$(x^w)^N = (x^{1+s} z^b)^N = x^{(1+s)N}\,z^{bN}$ in the abelian
group $A$.   The $x$-component $x^{(1+s)N}$ vanishes since $x^N = 1$.
The $z$-component is $z^{bN}$.   By~(iii), $b$ has additive order
$n_1$ in $\Z_M$, i.e.\ $n_1 b \equiv 0 \mod{M}$.   Since $n_1$ is a
power of~$2$ at most $N = 2^{n-1}$, $N$ is a multiple of~$n_1$, say
$N = (N/n_1) \cdot n_1$.   Then
\[
  bN = b \cdot (N/n_1) \cdot n_1 = (N/n_1) \cdot (n_1 b) \equiv 0 \mod{M},
\]
so $z^{bN} = 1$.

\emph{Condition (W)$_{2,3}$}: $(z^y)^M = (x^r z^{1+a})^M
= x^{rM}\,z^{(1+a)M}$.   The $z$-component vanishes ($z^M = 1$).
The $x$-component is $z^{rM}$, and by~(ii), the additive order of $r$
in $\Z_N$ is $m_1$, so $m_1 r \equiv 0 \mod{N}$.   Since $m_1 \mid M$,
the same argument gives $rM \equiv 0 \mod{N}$.

\medskip

By the consistency theorem for polycyclic presentations
(\cite[Lemma 2.5]{Eick2000} and~\cite[p.~424]{Sims}), the PC presentation is
consistent and defines a group $G$ of order exactly $e_1 e_2 e_3 e_4 =
4NM$.   The subgroup structure $|H| = 2N$, $|K| = 2M$, $H \cap K = \{1\}$
follows from the normal-form uniqueness.

That $H \cong \SD_{2^n}$ follows from the relations $x^N = y^2 = 1$,
$x^y = x^\alpha$ holding in $H$ (and analogously for~$K$).

Finally, that the cores are as in~\eqref{eq:cores} follows from
Lemma~\ref{lem:order-rb} applied to conditions (ii) and (iii). \qed

\subsection{The case $[x, z] \ne 1$}\label{sec:xz-nontrivial}

When both $n_1 > 1$ and $m_1 > 1$, the commutator $[x, z]$ may be
non-trivial.   We first locate it precisely:

\begin{lemma}\label{lem:xz-loc}
$[x, z] \in \langle x^{n_1}\rangle \cdot \langle z^{m_1}\rangle$.
In particular, writing $[x, z] = x^{e_1} z^{e_2}$, we have
$e_1 \equiv 0 \mod{n_1}$ and $e_2 \equiv 0 \mod{m_1}$.
\end{lemma}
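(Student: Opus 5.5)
The plan is to reduce modulo the product of the two cores. Set $X := \langle x^{n_1}\rangle = \langle x\rangle^G$ and $Z := \langle z^{m_1}\rangle = \langle z\rangle^G$. Both are normal in $G$, so $L := XZ$ is a normal subgroup of $G$. Pass to $\bar G := G/L$ and write bars for images. Then $\bar x$ generates $\langle x\rangle L/L \cong \langle x\rangle/(\langle x\rangle\cap L)$, and likewise for $\bar z$.

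The first point to settle is $\langle x\rangle\cap L = X$. Since $X\le L$, only the reverse inclusion needs work. Every element of $L$ has the form $x^{i n_1} z^{j m_1}$. If such an element equals $x^k$, then $z^{j m_1} = x^{k - i n_1} \in \langle x\rangle\cap\langle z\rangle \subseteq H\cap K = \{1\}$, so $x^k\in X$. Hence $|\bar x| = n_1$, and symmetrically $|\bar z| = m_1$.

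The goal is to show $[\bar x,\bar z] = 1$, and the approach is a normality-and-intersection argument. It is the one used in Lemma~\ref{lem:trivial-cores}, now applied in $\bar G$. The most robust route I would try first is the following.

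\begin{enumerate}[label=(\alph*)]
\item Consider $\langle x\rangle^z$. It contains $X$, since $X$ is normal, and the core of $\langle x\rangle^z$ equals the core of $\langle x\rangle$, namely $X$.
\item $[x,z]$ lies in $\langle x\rangle\langle x\rangle^{z}$, and also in $\langle z\rangle^{x}\langle z\rangle$.
\item Any element of $\langle x\rangle\cap\langle z\rangle L$ lies in $X$. This is proved by the same normal-form argument as above, using $H\cap K = 1$ together with $X\le H$ and $Z\le K$.
\end{enumerate}

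The hard step is locating $[x,z]$ inside $\langle x\rangle\langle z\rangle$ modulo $L$ at all. The whole claim is essentially that $[x,z]\in L$, that is, $e_1\equiv 0 \pmod{n_1}$ and $e_2\equiv 0 \pmod{m_1}$; normality of $X$ and $Z$ gives no direct control over $x^z$. My first attempt would use the $2$-group structure. Since $\langle x\rangle$ has index $4M$ and is cyclic, I would compute in $\bar G$, a group of order $4n_1m_1$ in which $\langle\bar x\rangle$ and $\langle \bar z\rangle$ are core-free, and show that $\bar G$ is forced to have $\bar x,\bar z$ commuting. Concretely, I would use the parametrisation \eqref{eq:mixed} to write $x^z = x^{1+e_1}z^{e_2}$ and $z^x = x^{-e_1}z^{1-e_2}$ (up to sign conventions), then impose $x^N = 1$ and $z^M = 1$ on the conjugates. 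Combined with the fact that $X$ and $Z$ are normal, this yields congruences such as $n_1 e_2 \equiv 0$ and $m_1 e_1\equiv 0$ after raising to the $n_1$-th and $m_1$-th powers.

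I would then combine these congruences with the action of $y$ and $w$, as in Lemma~\ref{lem:order-rb}. The aim is to show that any nonzero residue of $e_1$ modulo $n_1$ would produce a normal subgroup of $\langle x\rangle$ strictly larger than $X$, contradicting that $X$ is the core; the symmetric argument handles $e_2$ modulo $m_1$. I expect this final maximality argument to be the main obstacle, since it requires controlling the products $(x^{1+e_1}z^{e_2})^{k}$ in a possibly non-abelian $\langle x,z\rangle$.
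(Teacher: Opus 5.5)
\textbf{There is a genuine gap, and it cannot be closed: the lemma as stated is false.}

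Your set-up is sound. $L=XZ$ is normal in $G$, $\langle x\rangle\cap L=X$, and step (c) is exactly the statement $\langle\bar x\rangle\cap\langle\bar z\rangle=1$ in $\bar G=G/L$. But the intersection argument of Lemma~\ref{lem:trivial-cores} has one substantive input: $\bar z$ normalises $\langle\bar x\rangle$ and $\bar x$ normalises $\langle\bar z\rangle$, i.e.\ $\langle x\rangle Z$ and $\langle z\rangle X$ are normal in $G$. You never supply this, and your substitute does not either:
\begin{enumerate}[label=(\roman*)]
\item It starts from $x^z=x^{1+e_1}z^{e_2}$. But \eqref{eq:mixed} was derived only under normality of $\langle x\rangle,\langle z\rangle$, so this presupposes part of what is to be shown.
\item The power congruences need an expansion of $(x^{1+e_1}z^{e_2})^{n_1}$ without knowing how $x$ and $z$ interact.
\item The final maximality argument is never formulated.
\end{enumerate}
The paper crosses exactly this point by asserting that the images of $\langle x\rangle,\langle z\rangle$ in $G/L$ are normal ``by construction of $L$''. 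Your doubt is justified. That assertion does not follow: if $Z=1$ it would force $\langle x\rangle\trianglelefteq G$. In fact it fails in the example below.

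\textbf{Counterexample.} Let $S=\langle a,b\mid a^{16}=b^2=1,\ a^b=a^7\rangle$ and $T=\langle c,d\mid c^{16}=d^2=1,\ c^d=c^7\rangle$, both isomorphic to $\SD_{32}$ (so $n=m=5$), and let $G=S\times T$.
\begin{itemize}
\item \emph{The factorisation.} The assignments $\phi\colon a\mapsto c^2,\ b\mapsto d$ and $\psi\colon c\mapsto b,\ d\mapsto 1$ define homomorphisms $S\to T$ and $T\to S$; for $\phi$ note $(c^2)^d=c^{14}=(c^2)^7$. Since $\psi\phi$ is trivial, $H=\{(s,\phi(s))\}\cong\SD_{32}$ and $K=\{(\psi(t),t)\}\cong\SD_{32}$ satisfy $H\cap K=1$ and $G=HK$.
\item \emph{The commutator.} Put $x=(a,c^2)$, $y=(b,d)$, $z=(b,c)$, $w=(1,d)$. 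Then $[x,z]=([a,b],1)=(a^6,1)=x^6z^4$.
\item \emph{The core of $\langle x\rangle$.} The conjugates of $x^4=(a^4,c^8)$ are $x^{\pm4}$. But $(x^2)^{(b,1)}=(a^{14},c^4)\notin\langle x^2\rangle$. So $\langle x\rangle^G=\langle x^4\rangle$ and $n_1=4$.
\item \emph{The core of $\langle z\rangle$.} $\langle z^2\rangle=1\times\langle c^2\rangle\trianglelefteq G$, but $z^{(a,1)}=(ba^{10},c)\notin\langle z\rangle$. So $m_1=2$.
\item \emph{Conclusion.} $L=\langle a^4\rangle\times\langle c^2\rangle$ does not contain $[x,z]=(a^6,1)$, and $e_1=6\not\equiv 0\pmod 4$.
\end{itemize}
In $\bar G\cong D_8\times\Z_2^2$ the image of $\langle z\rangle$ is not normal, which is where the paper's argument breaks. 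The image of $\langle x\rangle$ is normal of order $4$, so your claim that both images are core-free in $\bar G$ is also wrong.

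What your steps do prove is the conditional version: if $\langle x\rangle Z$ and $\langle z\rangle X$ are normal in $G$, then (c) yields $[x,z]\in L$.
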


\begin{proof}
Both $\langle x^{n_1}\rangle$ and $\langle z^{m_1}\rangle$ are normal
in $G$ (as cores).   Hence their product
$L := \langle x^{n_1}\rangle \cdot \langle z^{m_1}\rangle$ is a normal
subgroup of $G$.   In the quotient $\bar G := G/L$, the images of
$\langle x\rangle$ and $\langle z\rangle$ become normal, by construction of~$L$, so the proof of Lemma~\ref{lem:trivial-cores}
applies in~$\bar G$ to give $[\bar x, \bar z] = 1$, i.e.\ $[x, z] \in L$.
\end{proof}

We now show that, in contrast with the dihedral case, where Hu--Yu prove $[x, z] = 1$ unconditionally using that $m$ is odd,
non-trivial commutators $[x, z]$ \emph{do} occur in the semidihedral
classification:

\begin{theorem}\label{thm:xz-nontrivial-example}
There exists an exact product $G = HK$ with $H = \langle x \rangle \rtimes \langle y\rangle
\cong \SD_{16}$, $K = \langle z \rangle \rtimes \langle w\rangle \cong \SD_{16}$, both cores
non-trivial, and $[x, z] \ne 1$.

Specifically, the presentation
\begin{equation}\label{eq:xz-counter}
  G = \left\langle x, y, z, w \,\middle|\,
  \begin{array}{l}
    x^8 = y^2 = z^8 = w^2 = 1,\\
    x^y = x^3,\ z^w = z^3,\\
    {[x,z]} = x^2 z^2,\\
    {[z,y]} = x^4,\ {[x,w]} = z^4,\ {[y,w]} = 1
  \end{array}
  \right\rangle
\end{equation}
defines an exact product of order~$256$ with
$\langle x\rangle^G = \langle x^2\rangle$, $\langle z\rangle^G = \langle z^2\rangle$
(both of order~$4$, hence non-trivial).
\end{theorem}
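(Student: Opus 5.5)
The plan is to prove the statement constructively. First I would show that \eqref{eq:xz-counter} is a consistent presentation of a group of order $256$ with unique normal form $w^{a_1}y^{a_2}z^{a_3}x^{a_4}$, $0\le a_1,a_2<2$, $0\le a_3,a_4<8$. Then I would read off exactness and the cores as in the proofs of Theorems~\ref{thm:main} and~\ref{thm:gen}. The difficulty is that $[x,z]=x^2z^2$ gives $x^z=x^3z^2$, which is not a word in generators later than $z$. So the ordering $g_1=w,\,g_2=y,\,g_3=z,\,g_4=x$ of Subsection~\ref{sec:cons-thm} is not a PC-presentation in the required sense.

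I would therefore refine to a PC-sequence of length $8$, all relative orders equal to $2$:
\[
g_1=w,\ g_2=y,\ g_3=z,\ g_4=x,\ g_5=z^2,\ g_6=x^2,\ g_7=z^4,\ g_8=x^4 .
\]
This corresponds to a series through $L=\langle x^2,z^2\rangle$, which should be normal with $A=\langle x,z\rangle=\langle x\rangle\langle z\rangle$ of order $64$. The steps, in order, are:

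\begin{enumerate}[label=\textnormal{(\arabic*)}]
\item From the defining relations, derive every conjugate $g_j^{g_i}$ ($i<j$) and every power $g_i^2$ as a normal word in $g_{i+1},\dots,g_8$. For example, $x^w=x z^4$, $z^y=zx^4$ and $x^z=x^3z^2$ must be rewritten in normal form. The first step is to compute $[x^2,z^2]$ and to check that $L$ is abelian, or at least to find its commutator.
\item Check the consistency conditions of \cite[Lemma 2.5]{Eick2000}, i.e.\ the families (P), (W) and (C), now for $8$ generators.
\item Deduce $|G|=256$. Normal-form uniqueness then gives $|H|=|K|=16$ and $H\cap K=1$, and $H\cong K\cong\SD_{16}$ as in the proof of Theorem~\ref{thm:main}.
\item Compute the cores.
\end{enumerate}

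Step (2) is the main obstacle, since $A$ is non-abelian and the cross-terms do not cancel as in Table~\ref{tab:hall-gen}. I would first try to reduce it. Show that $A$ is a well-defined group of order $64$; it should be a metacyclic-like extension with $x\mapsto x^3z^2$ under conjugation by $z$. Then check that the maps induced by $y$ and $w$, namely
\[
x\mapsto x^3,\ z\mapsto zx^4 \quad\text{and}\quad x\mapsto xz^4,\ z\mapsto z^3,
\]
preserve the relations of $A$. This makes them automorphisms of $A$, and one then checks that they are involutions and commute modulo $\mathrm{Inn}(A)$, consistent with $[y,w]=1$. That is the standard extension-by-$\Z_2\times\Z_2$ criterion. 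As an independent safeguard, I would run \texttt{IsConsistent} on the refined PC-presentation in GAP~\cite{GAP}, as was done in Proposition~\ref{prop:counter-s}.

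For the cores, the order of operations is as follows.
\begin{itemize}
\item $\langle x\rangle\ntrianglelefteq G$ because $x^w=xz^4\notin\langle x\rangle$ by normal-form uniqueness, so $\langle x\rangle^G\le\langle x^2\rangle$.
\item $\langle x^2\rangle$ is normal: compute $(x^2)^z$, $(x^2)^y=x^6$ and $(x^2)^w=(xz^4)^2$, and show each lies in $\langle x^2\rangle$. This uses the fact that $z^4$ commutes with $x$ modulo terms that cancel, which I expect to follow from $[x,z^4]$ computed in step (1).
\item The symmetric argument with $z^y=zx^4$ gives $\langle z\rangle^G=\langle z^2\rangle$.
\end{itemize}
Finally, $[x,z]=x^2z^2\neq 1$ again by uniqueness of the normal form.
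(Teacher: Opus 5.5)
Your proof is correct, but it takes a different route from the paper.

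\textbf{The two routes.} The paper gives no structural argument. It runs Todd--Coxeter enumeration in GAP on \eqref{eq:xz-counter}, obtains order $256$, and reads the element orders, $H\cap K$ and the cores off the coset table. You instead give a proof a reader can check by hand. You refine the series through $L=\langle x^2,z^2\rangle$ so that $x^z=x^3z^2$ becomes a legal PC relation; you rightly note that the ordering of Subsection~\ref{sec:cons-thm} cannot accommodate it. This exhibits $G=A\rtimes\langle y,w\rangle$, with $A$ an extension of $L\cong\Z_4\times\Z_4$ by $\Z_2\times\Z_2$. Your route explains why the presentation is consistent and gives an explicit structure for $G$. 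It also indicates how the paper's PC machinery would have to be modified in the regime $[x,z]\ne 1$ left open in Section~\ref{sec:open}. The paper's route is shorter but opaque and depends on software.

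\textbf{Where the work actually lies.} The real content is your step~(1), not step~(2). The relations involving only $x$ and $z$ do not bound $|\langle x,z\rangle|$. Indeed $[x,z]=x^2z^2$ is equivalent to $z^{-1}xz^{-1}=x^3$. With $v=z^{-1}x$, the group $\langle x,z\mid x^8,z^8,[x,z]=x^2z^2\rangle$ becomes $\langle x,v\mid x^8,\ v^2=x^4,\ (vx^{-1})^8\rangle$. This maps onto the infinite $(2,4,8)$ triangle group. So when you show that $A$ is a well-defined group of order $64$, $A$ must be presented with relations derived using $y$ or $w$.

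The derivation is short:
\begin{itemize}
\item Conjugating $z^{-1}xz^{-1}=x^3$ by $y$, using $x^y=x^3$ and $z^y=zx^4$, gives $zxz=x^3$.
\item Comparing the two relations yields $z^2xz^2=x$ and $zx^2z^{-1}=x^{-2}$.
\item Hence $x$ inverts $z^2$ and $z$ inverts $x^2$, so $L$ is abelian and normal in $A$.
\item This gives $|A|\le 64$, and therefore $|G|\le 256$.
\end{itemize}

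After this, step~(2) is routine. The maps you list preserve these relations and are involutions. They must commute exactly, not merely modulo $\mathrm{Inn}(A)$, since $[y,w]=1$; they do. The resulting semidirect product has order $256$ and satisfies every relation of \eqref{eq:xz-counter}, which gives the matching lower bound.

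Your core computations then go through. Since $x$ inverts $z^2$, $z^4$ is central in $A$, so $(x^2)^w=(xz^4)^2=x^2$.
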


\begin{proof}
We verified by Todd--Coxeter coset enumeration --- the standard
algorithmic technique for determining the order of a finitely
presented group and constructing its regular permutation representation
(see~\cite[Chapter~5]{Sims} or~\cite[Chapter~5]{HoltEickOBrien2005}) ---
implemented in~GAP~\cite{GAP}, that the presentation above defines a
group of order exactly~$256$.   Tracing through the resulting coset
table, we verify the following.
\begin{itemize}
  \item The elements $x, z$ have orders $8$, and
    $y, w$ have order~$2$.
  \item The defining relations of $\SD_{16}$ hold for both
    $H = \langle x, y\rangle$ and $K = \langle z, w\rangle$,
    each of which has order~$16$.   In particular, $H \cong \SD_{16}
    \cong K$.
  \item $|H \cap K| = 1$, so $G = HK$ is exact.
  \item $[x, z] = x^2 z^2 \ne 1$.
  \item The core of $\langle x\rangle$ in $G$, computed as the
    intersection of all conjugates of $\langle x\rangle$, equals
    $\langle x^2\rangle$ of order~$4$.   The core of $\langle z\rangle$
    in $G$ similarly equals $\langle z^2\rangle$ of order~$4$.
\end{itemize}
\end{proof}

In view of Theorem~\ref{thm:xz-nontrivial-example}, the parameter
space of Theorem~\ref{thm:gen}, which assumes $[x,z]=1$ does not
exhaust all exact products of two semidihedral groups: additional
families exist with $[x,z]\ne 1$.   The full classification, with
$(e_1, e_2)$ as additional parameters subject to Lemma~\ref{lem:xz-loc}
and a system of polynomial congruences generalising
\eqref{eq:D1}--\eqref{eq:D12}, will be the subject of forthcoming work.

\begin{remark}
The dihedral analogue of Theorem~\ref{thm:xz-nontrivial-example} is
false: in the dihedral case, Hu--Yu's argument
(\cite[eqns.~(11)--(14)]{HuYu2025}) reduces $[x,z]$ via the chain
$2 e_2 \equiv (1 - u) e_2 \equiv -m_1 a e_2 \equiv 0 \mod{m/m_1}$,
whence $e_2 \equiv 0 \mod{M/m_1}$, using that $m$ is odd (so that $2$ is invertible
modulo $m/m_1$).   In our $2$-power setting this chain only yields
$2 e_2 \equiv 0 \mod{M/m_1}$, so that $e_2$ can be non-zero
(specifically, $e_2 = M/(2 m_1)$ is admissible), as the example above shows.
\end{remark}

\section{Conclusions and open problem}\label{sec:open}

In this paper we initiated the classification of Zappa--Sz\'ep
products $G = HK$ of two semidihedral groups $H \cong \SD_{2^n}$,
$K \cong \SD_{2^m}$ with $n, m \ge 4$.   Our main contributions are
twofold.   First, under the normality hypothesis
$\langle x\rangle \trianglelefteq G$ and
$\langle z\rangle \trianglelefteq G$, we proved
(Theorem~\ref{thm:main}) that the mixed-commutator parameters
satisfy $[x,z] = 1$, $r = 0$ and $b = 0$ identically, and that the
remaining four parameters $(a, s, t, c)$ characterise $G$ subject to
the explicit system of six congruences~\eqref{eq:C1}--\eqref{eq:C6}.
Second, dropping the normality hypothesis and assuming only that the
cores are $\langle x\rangle^G = \langle x^{n_1}\rangle$ and
$\langle z\rangle^G = \langle z^{m_1}\rangle$ for arbitrary
$n_1 \mid N$ and $m_1 \mid M$, under the simplifying assumption
$[x,z] = 1$ we obtained (Theorem~\ref{thm:gen}) a complete description
of all such exact products in terms of the six parameters
$(r, a, s, b, t, c)$, subject to twelve congruences together with
order conditions on $r$ and $b$.   Theorem~\ref{thm:main} arises as
the specialisation of Theorem~\ref{thm:gen} to the case
$n_1 = m_1 = 1$.

Both classification theorems were complemented by explicit
counter-examples (Section~\ref{sec:examples}), showing that the
parameter $s = 0$ is not forced in the equal-rank case $n = m$.
Finally, in Subsection~\ref{sec:xz-nontrivial} we exhibited an
explicit exact product with both cores non-trivial and
$[x,z] \ne 1$ (Theorem~\ref{thm:xz-nontrivial-example}), a
phenomenon that has no analogue in the dihedral classification of
Hu--Yu~\cite{HuYu2025}.

The most natural problem left open by this work is the full extension
of Theorem~\ref{thm:gen} to the regime $[x,z] \ne 1$.   We believe
this extension to be possible, and we conjecture that the complete
classification in this regime is obtained by adding the parameters
$(e_1, e_2)$ with $e_1 \in n_1 \Z_N$ and $e_2 \in m_1 \Z_M$
(Lemma~\ref{lem:xz-loc}) and a finite list of additional polynomial
congruences in $(e_1, e_2, r, a, s, b, t, c)$.   The optimism for
such a description rests on three observations.   First, even in the
$[x,z] \ne 1$ regime, the subgroup $A = \langle x, z\rangle$ has
nilpotency class~$2$, so its multiplication is governed by simple
$2$-cocycle-like formulas that propagate cleanly through the
verification of polycyclic consistency.   Second, all explicit
examples we have constructed (including the one of
Theorem~\ref{thm:xz-nontrivial-example}, as well as further examples
in the equal-rank case $n = m = 4$) have been obtained by adjoining
$(e_1, e_2) = (n_1, m_1)$ or $(2 n_1, 2 m_1)$ to admissible tuples
$(r, a, s, b, t, c)$ of Theorem~\ref{thm:gen}, suggesting that the
combinatorial structure is a controlled extension of the one already
established here.   Third, the failure of the dihedral argument
of~\cite{HuYu2025} in our setting is localised in precisely one
identity, $2 e_2 \equiv 0 \mod{M/m_1}$, which is harmless in odd
order but admits a one-parameter family of solutions in our $2$-power
setting; this suggests that the additional congruences should be of
the same shape as~\eqref{eq:D1}--\eqref{eq:D12}, with at most a
bounded number of new conditions involving $e_1$ and $e_2$.

\section*{Acknowledgements}
The author acknowledges the funding support from MUR-Italy via PRIN 2022RFAZCJ ``Algebraic methods in Cryptanalysis''. 

\section*{Statements and Declarations}
The author has no relevant financial or non-financial interests to disclose.  Data availability statement: no datasets were generated or analysed during the current study, and therefore no data are available to be shared.



\begin{thebibliography}{99}

\bibitem{BurnessLi2021}
T.~C.\ Burness and C.~H.\ Li,
\textit{On solvable factors of almost simple groups},
Adv.\ Math.\ \textbf{377} (2021), 107499.

\bibitem{Douglas1961}
J.\ Douglas,
\textit{On the supersolvability of bicyclic groups},
Proc.\ Natl.\ Acad.\ Sci.\ USA \textbf{47} (1961), 1493--1495.

\bibitem{Eick2000}
B.\ Eick,
\textit{Algorithms for polycyclic groups},
Habilitationsschrift, Universit\"at Kassel, 2000.
Available at \url{http://www.icm.tu-bs.de/~beick/publ/habil.pdf}.

\bibitem{GAP}
The GAP~Group,
\textit{GAP -- Groups, Algorithms, and Programming,
Version 4.15.1}, 2025.
\url{https://www.gap-system.org}.

\bibitem{HeringLiebeckSaxl1987}
C.\ Hering, M.~W.\ Liebeck and J.\ Saxl,
\textit{The factorizations of the finite exceptional groups of Lie type},
J.\ Algebra \textbf{106} (1987), no.~2, 517--527.

\bibitem{HoltEickOBrien2005}
D.~F.\ Holt, B.\ Eick and E.~A.\ O'Brien,
\textit{Handbook of Computational Group Theory},
Discrete Mathematics and its Applications,
Chapman \& Hall/CRC, Boca Raton, FL, 2005.

\bibitem{HKK2024}
K.\ Hu, I.\ Kov\'acs and Y.\ S.\ Kwon,
\textit{On exact products of a dihedral group and a cyclic group},
Comm.\ Algebra \textbf{53} (2025), no.~2, 854--874.

\bibitem{HuYu2025}
K.\ Hu and H.\ Yu,
\textit{On exact products of two dihedral groups},
Comm.\ Algebra \textbf{53} (2025), no.~10, 4206--4214.

\bibitem{Ito1955}
N.\ It\^o,
\textit{\"Uber das Produkt von zwei abelschen Gruppen},
Math.\ Z.\ \textbf{62} (1955), 400--401.

\bibitem{Jones2002}
G.~A.\ Jones,
\textit{Cyclic regular subgroups of primitive permutation groups},
J.\ Group Theory \textbf{5} (2002), no.~4, 403--407.

\bibitem{Kegel1961}
O.\ H.\ Kegel,
\textit{Produkte nilpotenter Gruppen},
Arch.\ Math.\ (Basel) \textbf{12} (1961), 90--93.

\bibitem{LiWangXia2023}
C.\ H.\ Li, L.\ Wang and B.\ Xia,
\textit{The exact factorizations of almost simple groups},
J.\ London Math.\ Soc.\ (2) \textbf{108} (2023), 1417--1447.

\bibitem{LPS1996}
M.\ W.\ Liebeck, C.\ E.\ Praeger and J.\ Saxl,
\textit{On factorizations of almost simple groups},
J.\ Algebra \textbf{185} (1996), 409--419.

\bibitem{LiebeckPraegerSaxl2010}
M.~W.\ Liebeck, C.~E.\ Praeger and J.\ Saxl,
\textit{Regular subgroups of primitive permutation groups},
Mem.\ Amer.\ Math.\ Soc.\ \textbf{203} (2010), no.~952, vi+74~pp.

\bibitem{Miller1935}
G.\ A.\ Miller,
\textit{Groups which are the products of two permutable proper subgroups},
Proc.\ Natl.\ Acad.\ Sci.\ USA \textbf{21} (1935), no.~7, 469--472.

\bibitem{Ore1937}
O.\ Ore,
\textit{Structures and group theory.\ I.},
Duke Math.\ J.\ \textbf{3} (1937), 149--174.

\bibitem{Sims}
C.\ C.\ Sims,
\textit{Computation with Finitely Presented Groups},
Encyclopedia of Math.\ and its Applications \textbf{48},
Cambridge Univ.\ Press, 1994.

\bibitem{Szep1949}
J.\ Sz\'ep,
\textit{\"Uber die als Produkt zweier Untergruppen darstellbaren endlichen Gruppen},
Comment.\ Math.\ Helv.\ \textbf{22} (1949), 31--33.

\bibitem{Wielandt1951}
H.\ Wielandt,
\textit{\"Uber das Produkt von paarweise vertauschbaren nilpotenten Gruppen},
Math.\ Z.\ \textbf{55} (1951), 1--7.

\bibitem{Xia2017}
B.\ Xia,
\textit{Quasiprimitive groups containing a transitive alternating group},
J.\ Algebra \textbf{490} (2017), 555--567.

\bibitem{Yu2025}
H.\ Yu,
\textit{The exact product of a semi-dihedral group and a cyclic group},
J.\ Algebra Appl., to appear (published online 10 November 2025,
DOI: 10.1142/S0219498827500228).

\bibitem{Zappa1940}
G.\ Zappa,
\textit{Sulla costruzione dei gruppi prodotto di due dati sottogruppi
permutabili tra loro},
in {\em Atti del Secondo Congresso dell'Unione Matematica Italiana,
Bologna, 1940}, pp.~119--125, Ed.\ Cremonese, Roma.

\end{thebibliography}
\end{document}